\documentclass[12pt]{amsart}

\usepackage{amssymb, amscd, txfonts}
\usepackage[all]{xy} 
\usepackage[pdftex]{graphicx}

 
\numberwithin{equation}{section}

\sloppy

\newtheorem{theorem}{Theorem}[section]

\newtheorem{lemma}[theorem]{Lemma}

\theoremstyle{definition}
\newtheorem{definition}[theorem]{Definition}

\newtheorem{condition}[theorem]{Condition}

\theoremstyle{remark}
\newtheorem{remark}[theorem]{Remark}
\newtheorem{claim}[theorem]{Claim}


\newcommand{\Z}{\mathbb{Z}}
\newcommand{\Q}{\mathbb{Q}}

\newcommand{\C}{\mathbb{C}}

\newcommand{\proj}{{\mathbb P}}

\newcommand{\plane}{{\mathbb P}^{2}}
\newcommand{\sextic}{|\mathcal{O}_{{\mathbb P}^{2}}(6)|}
\newcommand{\cubic}{|\mathcal{O}_{{\mathbb P}^{2}}(3)|}
\newcommand{\conic}{|\mathcal{O}_{{\mathbb P}^{2}}(2)|}
\newcommand{\lin}{|\mathcal{O}_{{\mathbb P}^{2}}(1)|}
\newcommand{\moduli}{\mathcal{M}_{r,a,\delta}}

\begin{document}

\title[]{Higher Chow cycles on some $K3$ surfaces with involution}
\author[]{Shouhei Ma}
\author[]{Ken Sato}
\thanks{Supported by KAKENHI 21H00971 and 20H00112} 
\address{Department~of~Mathematics, Tokyo~Institute~of~Technology, Tokyo 152-8551, Japan}
\email{ma.s.ae@m.titech.ac.jp}
\email{sato.k.da@m.titech.ac.jp}
\subjclass[2020]{}
\keywords{} 

\begin{abstract}
We construct, for each $3\leq r \leq 17$, an explicit family of higher Chow cycles of type $(2, 1)$ 
on a family of lattice-polarized $K3$ surfaces of generic Picard rank $r$, 
and prove that the indecomposable part of this cycle is non-torsion for very general members of the family. 
These are the first explicit examples of such families in middle Picard rank. 
Our construction is based on singular double plane model of $K3$ surfaces, 
and the proof of indecomposability is done by a degeneration method. 
\end{abstract}

\maketitle


\section{Introduction}\label{sec: intro}

Higher Chow groups ${\rm CH}^{k}(X, n)$ of a smooth complex projective variety $X$ 
were introduced by Bloch in his seminal paper \cite{Bl} 
as analogues of the classical Chow groups 
that give cycle-theoretic interpretation of building pieces of the higher $K$-groups of $X$. 
They play important roles in motive theory and arithmetic geometry, 
but at the same time also have touch with classical algebraic geometry. 
When $X$ is a surface, 
one of the first non-classical cases is ${\rm CH}^{2}(X, 1)$. 
Higher Chow cycles of this type have already exhibited new and rich geometric pictures. 
For example, they have Abel-Jacobi map to the weight $2$ Jacobian, 
which shows an intriguing analogy with the classical theory for divisors on curves. 

Being relatively new objects, 
some of the works on such cycles have targeted on construction of nontrivial examples. 
More precisely, for a cycle $\xi$ in ${\rm CH}^{2}(X, 1)$, 
its \textit{indecomposable part} $\xi_{ind}$ is defined as the image of $\xi$ in the quotient group 
\begin{equation*}
{\rm CH}^{2}(X, 1)_{ind} = {\rm CH}^{2}(X, 1)/{\rm Pic}(X)\otimes_{{\Z}}{\C}^{\ast}. 
\end{equation*}
The cycle $\xi$ is said to be \textit{indecomposable} if $\xi_{ind}\ne 0$. 
The quest of indecomposable cycles has been done especially for $K3$ surfaces, 
starting with the work of M\"uller-Stach \cite{MS}, 
further developed in \cite{Co}, \cite{dAMS}, \cite{CL}, \cite{Ke}, \cite{CDKL}, 
and more recently \cite{Sas}, \cite{Sr}, \cite{Sa}. 
In particular, in \cite{CDKL}, Chen-Doran-Kerr-Lewis proved the existence of an indecomposable cycle 
for a very general member of many (non-explicit) lattice-polarized families of $K3$ surfaces. 

However, in spite of these progresses, it was pointed out in the introduction of \cite{CDKL} that 
\textit{explicit} examples of indecomposable cycles had been successfully constructed 
for only few lattice-polarized families. 
In particular, except for the earlier works \cite{MS}, \cite{Co} where quartic surfaces were considered ($r\leq 2$), 
all later examples have generic Picard rank $r \geq 17$. 

The purpose of this paper is to construct plenty examples of explicit families 
of higher Chow cycles on $K3$ surfaces in middle Picard rank. 
More precisely, for each $3\leq r \leq 17$, 
we construct an explicit family of indecomposable cycles on 
a family of lattice-polarized $K3$ surfaces of generic Picard rank $r$. 
We will show in fact that they are non-torsion, which is stronger than nontriviality.  
Our construction is based on singular double plane model of $K3$ surfaces, 
and is related to some classical geometry. 
For example, our families include the famous family associated to six lines on ${\plane}$ (\cite{MSY}), 
for which our cycles are obtained by drawing the seventh line (\S \ref{ssec: r=16}). 
Our families also include some classical ones such as those associated to 
Coble curves (\S \ref{ssec: r=11}), Halphen curves, and del Pezzo surfaces of degree $\leq 7$ (\S \ref{ssec: 3r10}). 
In the del Pezzo case, construction of the cycles is related to lines on the del Pezzo surfaces. 

Let us be more specific. 
We consider pairs $(X, \iota)$ of a $K3$ surface $X$ and its non-symplectic involution $\iota$. 
The $K3$ surface $X$ is naturally lattice-polarized by the $\iota$-invariant part of $H^2(X, {\Z})$, 
which we denote by $H_{+}(X, \iota)$. 
There are exactly $75$ such families (\cite{Ni}) labelled by triplets $(r, a, \delta)$, 
where $r$ is the rank of $H_{+}(X, \iota)$ 
and $(a, \delta)$ are invariants which determine the discriminant form of $H_{+}(X, \iota)$. 
They satisfy $a\leq \min (r, 22-r)$ and $\delta\in \{ 0, 1\}$. 
Our method works for many cases $(r, a, \delta)$ with $3\leq r \leq 17$, 
but in order to keep the length of the paper reasonable, 
we select one extremal $(a, \delta)$ as a representative for each $r$: 
we take $a=\min (r, 22-r)$ and $\delta=1$.  

Thus, for each $3\leq r \leq 17$, we take a family 
$(\mathfrak{X}, \iota)\to \hat{U}$ of 
$K3$ surfaces with non-symplectic involution of this type $(r, a, 1)$ which dominates 
the moduli space (of dimension $20-r$), 
and construct an explicit family $\xi$ of higher Chow cycles of type $(2, 1)$ on $\mathfrak{X}\to \hat{U}$ 
in a systematic manner. 
Our main result is the following nontriviality (Theorem \ref{thm: nontrivial}). 

\begin{theorem}\label{thm: intro}
The indecomposable part $(\xi_{u})_{ind}\in {\rm CH}^2(\mathfrak{X}_u, 1)_{ind}$ 
of the higher Chow cycle $\xi_{u}$ is non-torsion for very general $u\in \hat{U}$. 
\end{theorem}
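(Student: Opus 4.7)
The natural strategy is to detect non-triviality via the real Beilinson regulator
\[
r_{\R} \colon {\rm CH}^2(X,1) \to H^2_{\mathcal{D}}(X, \R(2)) \cong H^{1,1}(X, \R).
\]
Since the target is a real vector space, $r_{\R}$ annihilates all torsion, so showing $r_{\R}(\xi_u) \neq 0$ is already enough to conclude non-torsion. The decomposable part ${\rm Pic}(X)\otimes \C^{\ast}$ maps under $r_{\R}$ into ${\rm NS}(X)_{\R}\subset H^{1,1}(X,\R)$, so composing with the projection onto the transcendental summand $H^{1,1}_{\rm tr}(X,\R)$ yields a transcendental regulator $r^{\rm tr}_{\R}$ defined on ${\rm CH}^2(X,1)_{\rm ind}$. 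The theorem thus reduces to showing $r^{\rm tr}_{\R}(\xi_u)\neq 0$ for very general $u\in \hat{U}$.

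Assembled over $\hat{U}$, the family $\xi$ gives rise to a real normal function $\nu$ attached to the variation of Hodge structure $R^2\pi_{\ast}\R$ on $\mathfrak{X}\to \hat{U}$, and the desired conclusion for very general $u$ is equivalent to $\nu$ not vanishing identically. Following the degeneration strategy announced in the introduction, the plan is to select an analytic disk $\Delta \hookrightarrow \hat{U}$ whose central fibre $X_0$ is a suitably degenerate $K3$ surface --- for instance, one whose branch sextic acquires an extra singularity, or for which one of the auxiliary curves used to build $\xi$ becomes tangent to the branch divisor. The central fibre is chosen precisely so that it admits a particularly transparent classical description (six lines, del Pezzo lines, Coble or Halphen curves, as enumerated in \S\ref{ssec: r=16}--\S\ref{ssec: 3r10}), in which $\xi$ specialises to a tractable higher Chow cycle on the semistable resolution of $X_0$. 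By the admissibility/asymptotic theory of normal functions, the singularity of $\nu$ at $0$ is then controlled by a residue built from the limit mixed Hodge structure at $X_0$ and the vanishing cycle data of the degeneration.

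The main obstacle will be the explicit residue computation and the proof of its non-vanishing. Concretely, the residue reduces to a period integral of an algebraic $1$-form over a curve in the central fibre, extracted from the chosen double-plane model; non-vanishing is a transcendental condition that must be verified case by case as $r$ runs from $3$ to $17$, with the specific computation reflecting the particular classical geometry appearing at $X_0$. Once a single such degeneration with non-vanishing residue is exhibited, $\nu$ cannot vanish identically on $\hat{U}$, and the standard countable-union argument yields $r^{\rm tr}_{\R}(\xi_u)\neq 0$, and hence $(\xi_u)_{ind}$ non-torsion, for very general $u\in \hat{U}$.
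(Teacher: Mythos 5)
Your plan has the right general flavor (a regulator that kills torsion, plus a degeneration argument), but as written it defers the entire mathematical content of the theorem to computations you never perform, and it misses the structural idea that makes the paper's proof work. The paper does \emph{not} verify a transcendental non-vanishing condition ``case by case as $r$ runs from $3$ to $17$.'' It sets up an induction on $r$: the parameter space $\hat{U}_{r+1}$ sits (up to an etale map) in the boundary of a partial compactification $\hat{U}_r'$ of $\hat{U}_r$, so non-torsionness of the (anti-invariant) normal function for the step $r+1$ propagates to the step $r$ via a single degeneration lemma (Lemma \ref{lem: degeneration}). Only one ``hard'' input is needed, the base case $r=18$, and that is imported from \cite{Sa}. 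Your proposal, by contrast, requires fifteen independent residue/period computations whose non-vanishing you acknowledge is ``a transcendental condition that must be verified''; until those are done, there is no proof. Moreover, the mechanism you invoke is different from the paper's: you propose to read off the \emph{singularity} of an admissible normal function at the boundary via the limit mixed Hodge structure and vanishing cycles, whereas the paper uses the \emph{value} of the normal function at the boundary point. After Tjurina's simultaneous resolution, the degenerating family of sextics still yields a smooth family of $K3$ surfaces over the disc, the relevant local system $\mathcal{H}_-$ extends across $t=0$, and the value $\nu_-(\xi)(0)$ surjects onto the anti-invariant regulator of the special (more singular, $r+1$) sextic — so no limit MHS or vanishing-cycle analysis is needed at all.

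There is also a gap in your reduction step. You project the real regulator ``onto the transcendental summand $H^{1,1}_{\rm tr}(X,\R)$'' and assemble this into a normal function over $\hat{U}$, but the transcendental summand is not stable in families: ${\rm NS}(X_u)$ jumps on countably many subvarieties of $\hat{U}$, so this projection does not glue to a section of anything over $\hat{U}$, and the ``not identically zero implies nonzero for very general $u$'' argument does not apply as stated. This is exactly the problem the paper's Jacobian $J(X,\iota)$ of the involution is designed to solve: the anti-invariant lattice $H_-(X,\iota)$ is deformation-invariant, the quotient $J(X)\twoheadrightarrow J(X,\iota)$ kills the decomposable contribution (Lemma \ref{lem: anti-inv regulator}), and $J(X,\iota)=J_{tr}(X)$ for very general $u$, so non-torsionness of the anti-invariant normal function does imply non-torsionness of $(\xi_u)_{ind}$. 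You would need an analogous deformation-stable replacement for your transcendental projection before your countable-union argument can be run.
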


Here \textit{very general} points of $\hat{U}$ means points in the complement of countably many proper analytic subsets. 
The parameter space $\hat{U}$ is not the moduli space, 
but if we restrict the family to a suitable subvariety of $\hat{U}$, 
we can get a family over an etale cover of a Zariski open set of the moduli space. 

Our cycle family $\xi$ is constructed as follows. 
The space $\hat{U}$ parametrizes a certain type of singular plane sextics, 
and the fiber $X=\mathfrak{X}_{u}$ of $\mathfrak{X}\to \hat{U}$ over $u\in \hat{U}$ 
is the minimal resolution of the double cover 
$\bar{\pi}\colon \bar{X}\to {\plane}$ 
branched over the sextic. 
This sextic will have two specified nodes $q_1$, $q_2$. 
We draw the line $L=\overline{q_1 q_2}$ joining them. 
Then we get two $(-2)$-curves on $X$: 
 the exceptional curve $Z_1$ over $q_1$, and the strict transform $Z_0$ of $\bar{\pi}^{-1}(L)$. 
These two $(-2)$-curves intersect transversely at two points. 
Then a choice of suitable rational functions on $Z_0$, $Z_1$ defines a higher Chow cycle of type $(2, 1)$ on $X$ 
(see \S \ref{ssec: from sextic to cycle}). 
This construction can be done in family, and this produces $\xi$. 
We can also use the $(-2)$-curve over $q_2$ in place of $Z_1$. 
This shows that ${\rm CH}^{2}(X, 1)_{ind}$ has rank $\geq 2$: see Remark \ref{rmk: rank 2}. 


The proof of Theorem \ref{thm: intro} uses a degeneration method, and proceeds inductively on $r$ as follows. 
We observe that the family in the case $r+1$ can be obtained as a degeneration of the family in the case $r$. 
In the starting case $r=18$, the assertion of Theorem \ref{thm: intro} is essentially proved in \cite{Sa} based on another model. 
What we do in this paper is to run the induction: 
we want to deduce nontriviality for the family in the step $r$ 
from that for the degenerated family in the step $r+1$. 

In order to realize this idea, we use a variant of normal functions. 
We define the Jacobian $J(X, \iota)$ of $(X, \iota)=(\mathfrak{X}_{u}, \iota_{u})$ 
as the generalized complex torus attached to the Hodge structure on 
the dual lattice of the $\iota$-anti-invariant part of $H^2(X, {\Z})$ (see \S \ref{ssec: Jacobian}). 
Then we have the anti-invariant Abel-Jacobi map 
\begin{equation*}
\nu_{-} : {\rm CH}^{2}(X, 1) \to J(X, \iota). 
\end{equation*}
If we apply $\nu_{-}$ to our cycle $\xi_{u}$ and vary $u$, 
we get a holomorphic section of the Jacobian fibration over $\hat{U}$. 
This is the \textit{anti-invariant normal function} of $\xi$. 
There are several technical benefits of introducing $J(X, \iota)$: 
\begin{enumerate}
\item stability under deformation and mild behavior under degeneration; 
\item coincidence with the transcendental Jacobian for very general $u$; 
\item detection of a part of the regulator of our cycle which depends only on the underlying (marked) sextic. 
\end{enumerate} 
By (2), the proof of Theorem \ref{thm: intro} is reduced to 
verifying non-torsionness of the anti-invariant normal function of $\xi$. 
This is what we actually prove inductively. 
Then (1) enables us to do a degeneration argument in a reasonable way (\S \ref{ssec: degeneration lemma}). 
The property (3) is also crucial at various points. 

Looking over the whole process, the structure of this paper is as follows. 
The construction of cycle families starts with 
the del Pezzo surface of degree $7$ and an $A_2$-configuration of lines on it (the case $r=3$), 
and proceeds to lower-dimensional families by degeneration. 
(We climb up and down the roof of the Nikulin mountain \eqref{eqn: Nikulin mountain}.) 
On the other hand, the proof of nontriviality proceeds in the reverse direction, 
starting from the case $r=18$ (\cite{Sa}) and spreading to higher-dimensional families. 
This degeneration method, realized via the anti-invariant normal functions, 
would be the main technical novelty of this paper. 
We expect that a similar mechanism works for other series of lattice-polarized $K3$ families as well, 
which will convince us further the plentyness of families of higher Chow cycles. 

The rest of this paper is organized as follows. 
In \S \ref{sec: K3} we recall the basic theory of $K3$ surfaces with non-symplectic involution. 
In \S \ref{sec: CH21} we recall the basic theory of higher Chow cycles of type $(2, 1)$ on $K3$ surfaces. 
In \S \ref{sec: CH21 sextic} we explain how to construct higher Chow cycles from plane sextics with two nodes, 
and also define the Jacobian $J(X, \iota)$. 

Construction of the cycle families and the proof of Theorem \ref{thm: intro} are done in 
\S \ref{sec: recipe} and \S \ref{sec: case-by-case} for all $r$. 
Although our construction and proof are done quite systematically, 
there still remain some parts of case-by-case nature that depend on $r$ in early stages of defining the families. 
This prevents us from carrying out the process for all $r$ in one time. 
It is, however, unreasonable to spend many pages for repeating similar process. 
Therefore we decided to take the following style of presentation: 
\begin{itemize}
\item We can (and do) separate the case-by-case part and the uniform part of the whole process. 
The case-by-case part takes place only in the first few steps of the construction of family. 
The remaining part of the construction, as well as the proof of non-torsionness, can be done uniformly.  
\item We carry out the case-by-case part for each $r$ in \S \ref{sec: case-by-case}. 
\item In \S \ref{sec: recipe}, after summarizing the case-by-case constructions in \S \ref{sec: case-by-case}, 
we carry out the uniform process in full detail. 
\end{itemize} 
Thus, logically, 
\S \ref{sec: recipe} should be read after \S \ref{sec: case-by-case}: 
strictly speaking, after each subsection of \S \ref{sec: case-by-case} repeatedly. 
We reverse this order in our presentation,  
with the intention of clarifying the whole picture. 
We hope this could help the readers, rather than confusing them. 
See \S \ref{ssec: recipe summary} for a more detailed summary of the whole process.

\section{$K3$ surfaces with non-symplectic involution}\label{sec: K3}

In this section we recall the basic theory of $K3$ surfaces with non-symplectic involution developed by Nikulin \cite{Ni}. 
Throughout this paper, unless stated otherwise, 
a \textit{$K3$ surface} means a smooth complex projective $K3$ surface. 
A \textit{lattice} means a free $\Z$-module $L$ of finite rank 
equipped with a nondegenerate symmetric form $L\times L\to \Z$. 
The dual lattice of $L$ is denoted by $L^{\vee}$. 
This is canonically embedded in $L_{\Q}=L\otimes_{\Z} \Q$ by the pairing. 

\subsection{Classification theory}\label{ssec: K3 classify}

Let $X$ be a $K3$ surface. 
An involution $\iota\colon X\to X$ of $X$ is called \textit{non-symplectic} 
if it acts by $-1$ on $H^0(K_{X})$. 
Following \cite{Yo}, \cite{Ma1}, 
we call a pair $(X, \iota)$ of a $K3$ surface and its non-symplectic involution a \textit{2-elementary $K3$ surface}. 
The invariant and the anti-invariant lattices of $(X, \iota)$ are defined by 
\begin{equation*}
H_{\pm}(X, \iota) := \{ \: l\in H^2(X, {\Z}) \: | \: \iota^{\ast}l=\pm l \: \}. 
\end{equation*}
Then $H_{+}(X, \iota)$ is a Lorentzian lattice, 
$H_{-}(X, \iota)$ has signature $(2, \ast)$, 
and their discriminant groups 
$A_{H_{\pm}}=H_{\pm}(X, \iota)^{\vee}/H_{\pm}(X, \iota)$ 
are (isomorphic) 2-elementary abelian groups. 
Since $H^{2,0}(X)\subset H_-(X, \iota)_{{\C}}$, 
we have $H_+(X, \iota)\subset {\rm NS}(X)$ and $T(X)\subset H_-(X, \iota)$, 
where ${\rm NS}(X)$ is the N\'eron-Severi lattice and 
$T(X)= {\rm NS}(X)^{\perp}\cap H^2(X, \Z)$ is the transcendental lattice of $X$. 

By Nikulin \cite{Ni}, the deformation type of $(X, \iota)$ is determined by 
the isometry class of the invariant lattice $H_+(X, \iota)$, 
which in turn is determined by its \textit{main invariant} $(r, a, \delta)$, where 
\begin{itemize}
\item $r$ is the rank of $H_+(X, \iota)$, 
\item $a$ is the dimension of $A_{H_{+}}$ over $\mathbb{F}_{2}$, and 
\item $\delta \in \{ 0, 1 \}$ is the parity of the natural ${\Q}/2{\Z}$-valued quadratic form on $A_{H_{+}}$ 
(the so-called \textit{discriminant form}). 
\end{itemize}
The fixed locus $X^{\iota}$ of the involution $\iota$ is a disjoint union of smooth curves. 
More specifically (\cite{Ni}), when $(r, a, \delta)\ne (10, 10, 0), (10, 8, 0)$, 
we have $X^{\iota}=C \sqcup E_{1} \sqcup \cdots \sqcup E_{k}$ 
where $C$ is a smooth curve of genus $g$ and $E_{i}$ are $(-2)$-curves with 
\begin{equation*}\label{eqn: (g, k)}
g=11- (r+a)/2, \qquad k=(r-a)/2. 
\end{equation*}
By the classification of Nikulin \cite{Ni}, there are exactly 75 deformation types $(r, a, \delta)$. 
Among them, 59 have $\delta=1$ and $16$ have $\delta=0$. 
The range of $(r, a)$ is contained in the region 
\begin{equation}\label{eqn: Nikulin mountain}  
1\leq r \leq 20, \quad 0\leq a\leq r, \quad r+a\leq 22, 
\end{equation}
and satisfies $r\equiv a$ mod $2$. 
See \cite{Ni} for the precise distribution of $(r, a, \delta)$. 

We denote by ${\moduli}$ the moduli space of 2-elementary $K3$ surfaces of invariant $(r, a, \delta)$. 
This is an irreducible quasi-projective variety of dimension $20-r$, 
realized as a Zariski open set of an orthogonal modular variety attached to the anti-invariant lattices 
via the period mapping. 
We will not need a precise description of ${\moduli}$ (see \cite{Yo}). 
The only property of the moduli space we will use later is the fact that  
\begin{equation*}
H_{+}(X, \iota) = \textrm{NS}(X) 
\end{equation*}
for very general members $(X, \iota)$ of ${\moduli}$. 

In this paper we will be interested in families of 2-elementary $K3$ surfaces in the following two series 
(the roof of the Nikulin mountain \eqref{eqn: Nikulin mountain}): 
\begin{itemize}
\item[(A)] $3\leq r\leq 11$, $r=a$ ($\Leftrightarrow k=0$) and $\delta=1$
\item[(B)]  $12\leq r \leq 18$, $r+a=22$ ($\Leftrightarrow g=0$) and $\delta=1$ for $r\leq 17$ but $\delta=0$ for $r=18$. 
\end{itemize}
Thus, for each $3\leq r \leq 18$, 
we have selected one $(r, a, \delta)$ with maximal $a$. 
The choice of the parity $\delta$ is in fact almost unique: 
if $r\ne 10, 18$, we have only $\delta=1$ for the above value of $a$. 
The series (A) have invariant lattice 
$H_{+}=\langle 2 \rangle \oplus \langle -2 \rangle^{\oplus r-1}$, 
and the series (B) with $r\leq 17$ have anti-invariant lattice 
$H_{-}=\langle 2 \rangle^{\oplus 2} \oplus \langle -2 \rangle^{\oplus 20-r}$. 

Families of 2-elementary $K3$ surfaces in these series have been classically studied  
from various points of view. 
This richness is one reason for our choice of these invariants $(r, a, \delta)$. 
Another reason is the existence of certain isogeny between the moduli spaces with common $r$ (\cite{Ma1}). 
The moduli spaces ${\moduli}$ with maximal $a$ are initial with respect to this relation. 
Thus we have chosen an extremal invariant $(r, a, \delta)$ for each $3\leq r \leq 18$.

\subsection{Plane sextics}\label{ssec: double plane} 

One standard way to construct 2-elementary $K3$ surfaces 
is to take double covers of ${\plane}$ branched over singular sextic curves. 
In this subsection we recall this construction. 

Let $B\subset {\plane}$ be a plane sextic with at most simple singularities. 
(In our later examples, $B$ will have at most nodes and ordinary triple points; 
when $r\leq 16$, only nodes.) 
Let $\bar{\pi}\colon \bar{X}\to {\plane}$ be the double cover of ${\plane}$ branched over $B$. 
Then $\bar{X}$ is a singular $K3$ surface with at most ADE singularities 
over the singularities of $B$ (of the same name).  
The minimal resolution $X\to \bar{X}$ of these rational double points is a smooth $K3$ surface. 
The covering transformation of $\bar{X}\to {\plane}$ extends to a non-symplectic involution $\iota$ of $X$. 
In this way we obtain a 2-elementary $K3$ surface $(X, \iota)$ from $B$. 
%

The quotient $Y=X/\iota$ is a smooth rational surface. 
We have the commutative diagram 
\begin{equation*}
\xymatrix{
X \ar[d] \ar[r] & Y \ar[d] \\ 
\bar{X} \ar[r]^{\bar{\pi}} & {\plane}
}
\end{equation*}
where $Y\to {\plane}$ is an explicit blow-up supported on the singular points of $B$. 
(For example, each node is blown up once.) 
The inverse images of the exceptional curves of $Y\to {\plane}$ in $X$ 
are the exceptional curves of $X\to \bar{X}$. 
Pullback by the quotient map $X\to Y$ gives an isomorphism 
${\rm NS}(Y)_{{\Q}}\to H_+(X, \iota)_{{\Q}}$ of ${\Q}$-linear spaces. 
This shows that $H_+(X, \iota)_{{\Q}}$ is generated by the pullback of $\mathcal{O}_{{\plane}}(1)$ 
and the exceptional curves of $X\to \bar{X}$. 

Let us notice the following property which will be used in \S \ref{ssec: degeneration lemma}. 

\begin{lemma}\label{lem: H- vs IH}
Let $H^2(\bar{X}, {\Q})_{prim}$ be the primitive part of $H^2(\bar{X}, {\Q})$, i.e., 
the orthogonal complement of $\bar{\pi}^{\ast}\mathcal{O}_{{\plane}}(1)$. 
Pullback by $X\to \bar{X}$ defines an isomorphism 
$H^2(\bar{X}, {\Q})_{prim}\to H_{-}(X, \iota)_{{\Q}}$. 
\end{lemma}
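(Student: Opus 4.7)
The strategy is to combine two standard facts: (i) a resolution of ADE singularities decomposes rational $H^2$ into the pullback from the singular model plus the span of exceptional classes, and (ii) the cohomology of a finite quotient coincides with the invariant cohomology. The plan is to show that these two decompositions are compatible with the action of $\iota$ on $X$ and the covering involution $\bar{\iota}$ on $\bar{X}$, and that on the anti-invariant side nothing is contributed by the exceptional part.

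First I would set up the resolution decomposition. Since $\bar{X}$ has only rational double points, the Leray spectral sequence for $\pi\colon X\to\bar{X}$ (or equivalently a local Mayer–Vietoris around the singular points, using that each ADE exceptional fiber retracts to a tree of $\mathbb{P}^1$'s with trivial $H^3$) yields an $\iota$-equivariant orthogonal decomposition
\begin{equation*}
H^2(X,\mathbb{Q}) \;=\; \pi^{\ast} H^2(\bar{X},\mathbb{Q})\;\oplus\;E,
\end{equation*}
where $E$ is the $\mathbb{Q}$-span of the classes of exceptional curves of $\pi$. Injectivity of $\pi^{\ast}$ on $H^2$ is clear from $\pi_{\ast}\pi^{\ast}=\mathrm{id}$ (as $\pi$ is birational).

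Next I would analyze the $\bar{\iota}$-action on $H^2(\bar{X},\mathbb{Q})$. Because $\bar{X}/\bar{\iota}={\plane}$, transfer in rational cohomology gives $H^2(\bar{X},\mathbb{Q})^{\bar{\iota}}=\bar{\pi}^{\ast}H^2({\plane},\mathbb{Q})$, which is one-dimensional and generated by $\bar{\pi}^{\ast}\mathcal{O}_{{\plane}}(1)$. Hence the $(-1)$-eigenspace of $\bar{\iota}$ on $H^2(\bar{X},\mathbb{Q})$ is precisely the primitive part $H^2(\bar{X},\mathbb{Q})_{prim}$. Pulling back by $\pi$ and using $\iota$-equivariance of $\pi$, the subspace $\pi^{\ast}H^2(\bar{X},\mathbb{Q})_{prim}$ lies entirely inside $H_{-}(X,\iota)_{\mathbb{Q}}$.

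The remaining point is to show that $E$ contributes nothing to the anti-invariant part, i.e.\ $E^{-\iota}=0$. For this I would invoke the diagram involving $Y=X/\iota$ from the excerpt: since the exceptional curves of $X\to\bar{X}$ are, set-theoretically, the preimages under the quotient map $X\to Y$ of the exceptional curves of $Y\to{\plane}$, their classes are pullbacks from $Y$ and therefore $\iota$-invariant. (Equivalently, for each ADE singularity one checks that $\iota$ preserves each component of the exceptional fiber, acting on it as a $\mathbb{P}^1$-involution with two fixed points, so each exceptional class is $\iota$-invariant.) Thus $E\subset H_{+}(X,\iota)_{\mathbb{Q}}$ and $E^{-\iota}=0$.

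Combining, taking the $(-1)$-eigenspace of the decomposition gives $H_{-}(X,\iota)_{\mathbb{Q}}=\pi^{\ast}H^2(\bar{X},\mathbb{Q})_{prim}$, and with the injectivity of $\pi^{\ast}$ this is the desired isomorphism. The main potential obstacle is to justify cleanly the equivariant decomposition in the first step and to verify that every exceptional class is $\iota$-fixed in the non-$A_1$ cases; both reduce to a local computation at each ADE singularity of $\bar{X}$, but once recorded they make the rest of the argument a matter of tracking invariants and anti-invariants.
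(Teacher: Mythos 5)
Your argument is correct and follows essentially the same route as the paper: both rest on the decomposition $H^2(X,\Q)=\pi^{\ast}H^2(\bar{X},\Q)\oplus E$ with $E$ spanned by the exceptional classes, together with the identification of the invariant part with cohomology coming from the quotient (the paper quotes its earlier statement that $H_{+}(X,\iota)_{\Q}$ is generated by $\pi^{\ast}\bar{\pi}^{\ast}\mathcal{O}_{\plane}(1)$ and the exceptional curves, then takes orthogonal complements, which is the same computation you carry out via eigenspaces). Your extra steps — the transfer argument identifying $H^2(\bar{X},\Q)^{\bar{\iota}}$ with $\bar{\pi}^{\ast}H^2(\plane,\Q)$ and the component-by-component check that $E\subset H_{+}(X,\iota)_{\Q}$ — just re-derive the input the paper already established in \S 2.2, and they are correct for the $A_1$ and $D_4$ singularities occurring here.
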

 
Here $H^2(\bar{X}, {\Q})$ is naturally isomorphic to the intersection cohomology $IH^2(\bar{X}, {\Q})$ 
as $\bar{X}$ has at most rational double points, 
and the intersection pairing on $H^2(\bar{X}, {\Q})$ is the same as the one on $IH^2(\bar{X}, {\Q})$. 

\begin{proof}
The pullback $H^2(\bar{X}, {\Q})\to H^2(X, {\Q})$ is injective 
and the orthogonal complement of its image is generated by the exceptional curves over the rational double points. 
Since $H_+(X, \iota)_{{\Q}}$ is generated by 
the pullback of $\mathcal{O}_{{\plane}}(1)$ and these exceptional curves, 
this implies our assertion. 
\end{proof}

\section{Higher Chow cycles and normal functions}\label{sec: CH21}

In this section we recall the basic theory of higher Chow groups ${\rm CH}^{2}(X, 1)$ 
specializing to the case when $X$ is a $K3$ surface.

\subsection{Higher Chow cycles}\label{ssec: CH21}

Let $X$ be a $K3$ surface. 
The higher Chow groups ${\rm CH}^{k}(X, n)$ of $X$ were defined by Bloch \cite{Bl}. 
In this paper we are mainly interested in the case $(k, n)=(2, 1)$. 
In this case, it is well-known that $\mathrm{CH}^2(X,1)$ can be described as 
the middle homology group of the following complex 
(see, e.g., \cite{MS2} Corollary 5.3): 
\begin{equation*}
K_2^{\mathrm{M}} (\C(X)) \xrightarrow{T} 
\displaystyle\bigoplus_{Z\in X^{(1)}}\C(Z)^\times 
\xrightarrow{\mathrm{div}}\displaystyle\bigoplus_{p\in X^{(2)}}\Z\cdot p.  
\end{equation*}
Here $X^{(r)}$ denotes the set of irreducible closed subvarieties of $X$ of codimension $r$, 
and $T$ is the tame symbol map from the Milnor $K_2$-group of the function field of $X$. 
By this expression, 
higher Chow cycles in $\mathrm{CH}^2(X,1)$ can be represented by formal sums
\begin{equation}\label{formalsum}
\sum_j (Z_j, f_j)\in \displaystyle\bigoplus_{Z\in X^{(1)}}\C(Z)^\times
\end{equation}
where $Z_j$ are irreducible curves on $X$ and 
$f_j\in \C(Z_j)^\times$ is a non-zero rational function on the normalization of $Z_j$ 
such that $\sum_j {\mathrm{div}}(f_j) = 0$ as a $0$-cycle on $X$.

\subsection{Normal functions}\label{ssec: normal function}

Let $H=(H_{\Z}, F^{\bullet})$ be a $\Z$-Hodge structure of weight 2. 
The generalized intermediate Jacobian of $H$ is defined as the quotient group 
\begin{equation}\label{eqn: Jacobian HS}
J(H) = \frac{H_{\C}}{H_\Z+F^2H_{\C}}. 
\end{equation}
This is a generalized complex torus, i.e., 
a quotient of a ${\C}$-linear space by a discrete lattice. 
This construction can be considered in family: 
let $S$ be a complex manifold and $\mathcal{H} = (\mathcal{H}_{\Z}, F^{\bullet})$ 
be a variation of $\Z$-Hodge structures of weight 2 over $S$. 
Then 
\begin{equation*}
{\mathcal J}(\mathcal H) = 
\frac{\mathcal H_{\Z}\otimes \mathcal{O}_{S}}{\mathcal H_{\Z}+F^2\mathcal{H}}
\end{equation*}
is a family of generalized complex tori over $S$. 
A holomorphic section of ${\mathcal J}(\mathcal H) \to S$ 
satisfying the horizontality condition is called a \textit{normal function}. 

Let $X$ be a $K3$ surface. 
We write 
\begin{equation*}\label{eqn: Jacobian X}
J(X) = J(H^2(X, \Z)) 
\end{equation*} 
and simply call it the \textit{Jacobian} of $X$. 
We have the \textit{regulator map} 
\begin{equation}\label{regulatormap}
\nu \; : \; \mathrm{CH}^2(X,1) \to J(X) 
\end{equation}
as the Abel-Jacobi map for $\mathrm{CH}^2(X,1)$ (cf.~\cite{MS}, \cite{CDKL} etc). 

A family of higher Chow cycles gives rise to a normal function as follows. 
Let $\pi\colon X\rightarrow S$ be a smooth family of $K3$ surfaces over a complex manifold $S$. 
Let $\mathcal{J}(X/S\!) = \mathcal{J}(R^2\pi_{\ast}\Z)$ be the family of Jacobians attached to $R^2\pi_{\ast}\Z$.  
Suppose that we have irreducible divisors $Z_j \subset X$ which are smooth over $S$ and 
nonzero meromorphic functions $f_j$ on $Z_j$ whose zeros and poles are also smooth over $S$ 
such that $\sum_{j}{\mathrm{div}}(f_j) = 0$. 
By restricting $Z_j$ and $f_j$ to each fiber of $\pi$, we obtain the family
\begin{equation*}
\xi = (\xi_{s})_{s\in S} = \left( \sum_j \left((Z_j)_s, (f_j)_s\right)\right)_{s\in S} 
\end{equation*}
of higher Chow cycles over $S$. 
Then we have the section $\nu(\xi)$ of $\mathcal{J}(X/S\!) \to S$ defined by 
\begin{equation*}
\nu(\xi)(s) = \nu(\xi_s), \quad s\in S.  
\end{equation*}
This section is holomorphic (\cite{CL} Proposition 4.1) 
and satisfies the horizontality condition (\cite{CDKL} Remark 2.1), 
namely it is a normal function. 
We call $\nu({\xi})$ the normal function of $\xi$. 

\subsection{Indecomposable part}\label{ssec: indecomposable}

Let $X$ be a $K3$ surface. 
By the intersection product of higher Chow cycles (\cite{Bl}), 
we have a group homomorphism
\begin{equation}\label{intersectionproduct}
\mathrm{Pic}(X)\otimes_\Z \Gamma(X,\mathcal{O}_X^\times) \longrightarrow \mathrm{CH}^2(X,1). 
\end{equation}
Here, of course, $\Gamma(X,\mathcal{O}_X^\times)={\C}^{\ast}$. 
If $Z$ is an irreducible curve on $X$ and $\alpha\in {\C}^{\ast}$, 
the image of $[Z]\otimes \alpha$ by \eqref{intersectionproduct} is represented by 
$(Z,\alpha)$ in the presentation \eqref{formalsum}. 
The cokernel of the map \eqref{intersectionproduct} is denoted by $\mathrm{CH}^2(X,1)_{ind}$. 
The image of a higher Chow cycle $\xi\in \mathrm{CH}^2(X,1)$ in $\mathrm{CH}^2(X,1)_{ind}$ is 
denoted by $\xi_{ind}$ and called the \textit{indecomposable part} of $\xi$. 
The cycle $\xi$ is said to be \textit{indecomposable} if $\xi_{ind}\ne 0$. 

A standard way to detect indecomposability is to use the transcendental regulator. 
Let $T(X)$ be the transcendental lattice of $X$ and $T(X)^{\vee}$ be its dual lattice. 
We regard $T(X)^{\vee}$ as an overlattice of $T(X)$ naturally. 
Then $T(X)^{\vee}$ is a ${\Z}$-Hodge structure of weight $2$. 
We denote  
\begin{equation*}
J_{tr}(X) = J(T(X)^\vee) 
\end{equation*}
and call it the \textit{transcendendal Jacobian} of $X$. 
By the unimodularity of $H^2(X, {\Z})$, 
the image of the orthogonal projection 
$H^2(X, {\Z}) \to T(X)_{{\Q}}$ is $T(X)^{\vee}$. 
This projection 
$H^2(X, {\Z}) \twoheadrightarrow T(X)^{\vee}$ 
is a morphism of ${\Z}$-Hodge structures and hence induces a surjective morphism 
$J(X)\to J_{tr}(X)$ 
of generalized complex tori with kernel $\mathrm{NS}(X)\otimes_{{\Z}} {\C}^{\ast}$. 
If $(Z, \alpha)$ is a decomposable cycle, its regulator is 
\begin{equation}\label{eqn: regulator decomposable cycle}
\nu(Z, \alpha) = [Z]\otimes \alpha \; \; \; \in \; \mathrm{NS}(X)\otimes_{{\Z}} {\C}^{\ast} \subset J(X). 
\end{equation}
Hence the composition of the regulator map \eqref{regulatormap} 
with the projection $J(X)\to J_{tr}(X)$ 
factors through $\mathrm{CH}^2(X,1)_{ind}$: 
\begin{equation*}
\xymatrix{
\mathrm{CH}^2(X,1) \ar[r]^-{\nu} \ar@{->>}[d] & J(X) \ar@{->>}[d] \\
\mathrm{CH}^2(X,1)_{ind} \ar@{.>}[r]  & J_{tr}(X)
}
\end{equation*}
We call the induced map 
$\mathrm{CH}^2(X,1)_{ind} \rightarrow J_{tr}(X)$ 
the \textit{transcendental regulator map} and denote it by $\nu_{tr}$. 

\begin{remark}
Some authors use the composition map 
\begin{equation*}
\mathrm{CH}^2(X,1)_{ind} \rightarrow J_{tr}(X)\to H^{2,0}(X)^\vee/T(X)^{\vee} 
\end{equation*}
to detect indecomposability and call it the transcendental regulator. 
Since we will not use this composition but only the first map in this paper, 
we use this terminology for the first map, 
in view of the compatibility with ``transcendental lattice" and ``transcendental Jacobian". 
\end{remark}

\section{Higher Chow cycles attached to certain plane sextics}\label{sec: CH21 sextic}

In this section we explain the main construction in this paper, 
which associates to a plane sextic with two nodes 
a higher Chow cycle on the double covering $K3$ surface $(X, \iota)$. 
Previously this type of constructions have been considered in \cite{Sr2}, \cite{Sr}, \cite{Sa} in some special cases. 
A technical novelty here is the introduction of the Jacobian $J(X, \iota)$ of $(X, \iota)$ (\S \ref{ssec: Jacobian}). 
This has several benefits as summarized in \S \ref{sec: intro}, which we will see in due course.

\subsection{Jacobian of $(X, \iota)$}\label{ssec: Jacobian}

Let $(X, \iota)$ be a 2-elementary $K3$ surface. 
As in \S \ref{sec: K3}, we denote by $H_{-}(X, \iota)$ the $\iota$-anti-invariant part of $H^2(X, {\Z})$ 
and let $H_{-}(X, \iota)^{\vee}$ be its dual lattice. 
Since $H^{2,0}(X)\subset H_{-}(X, \iota)_{{\C}}$, 
then $H_{-}(X, \iota)^{\vee}$ is equipped with a weight $2$ Hodge structure. 
We denote by 
\begin{equation*}\label{eqn: J(X,i)} 
J(X, \iota) = J( H_{-}(X, \iota)^{\vee})  
\end{equation*}
the generalized intermediate Jacobian of $H_{-}(X, \iota)^{\vee}$  
and call it the \textit{Jacobian} of $(X, \iota)$. 
As in \S \ref{ssec: indecomposable}, we have the orthogonal projections 
\begin{equation*}
H^2(X, {\Z}) =H^2(X, {\Z})^{\vee} \twoheadrightarrow H_{-}(X, \iota)^{\vee} \twoheadrightarrow T(X)^{\vee}, 
\end{equation*}
which induce surjective homomorphisms 
\begin{equation*}
J(X) \twoheadrightarrow J(X, \iota) \twoheadrightarrow J_{tr}(X) 
\end{equation*}
of generalized complex tori. 
The kernel of $J(X) \to J(X, \iota)$ is 
$H_{+}(X, \iota)\otimes_{{\Z}}{\C}^{\ast}$. 
When ${\rm NS}(X)=H_+(X, \iota)$, we have $J(X, \iota)=J_{tr}(X)$. 

The transcendental Jacobian $J_{tr}(X)$ is indispensable for detecting the indecomposable parts of higher Chow cycles 
(\S \ref{ssec: indecomposable}). 
However, since the Picard number is far from being stable under deformation, so is $J_{tr}(X)$. 
On the other hand, $J(X, \iota)$ behaves smoothly under deformation of $(X, \iota)$, 
while it coincides with $J_{tr}(X)$ for very general $(X, \iota)$. 
This is one benefit of considering $J(X, \iota)$. 

\subsection{Marking of plane sextics}\label{ssec: marking}

Let $B\subset {\plane}$ be a plane sextic with at most simple singularities. 
We assume that $B$ has at least two nodes. 
In order to construct a higher Chow cycle on the associated $K3$ surface, 
we need to attach a certain type of marking to $B$. 
We define it in two steps. 

\begin{definition}\label{def: weak marking}
We call a choice of two ordered nodes of $B$ a \textit{weak marking} of $B$. 
Often we denote these two nodes by $q_1$ and $q_2$. 
\end{definition}

Let $L=\overline{q_{1} q_{2}}$ be the line joining the marked nodes $q_1$, $q_{2}$. 
In what follows, we impose the following genericity assumption: 

\begin{condition}\label{condition: genericity}
The line $L$ intersects with $B$ at $q_1$, $q_2$ and two other smooth points of $B$. 
\end{condition}

\noindent
This condition implies in particular that 
$L$ intersects with $B$ transversely at the two smooth points  
and also transversely with each branch of $B$ at the nodes $q_1, q_2$. 

Let $\bar{\pi}\colon \bar{X}\to {\plane}$ be the double cover branched over $B$. 
As explained in \S \ref{ssec: double plane}, $\bar{X}$ has $A_1$-points over $q_1$ and $q_2$. 
The inverse image $\bar{\pi}^{-1}(L)\subset \bar{X}$ of $L$ is an irreducible rational curve, 
which has nodes at $\bar{\pi}^{-1}(q_1)$ and $\bar{\pi}^{-1}(q_2)$ and has no other singularity. 
At each node, the curve $\bar{\pi}^{-1}(L)$ has two branches. 

\begin{definition}\label{def: strong marking}
We call a bijective map from 
the reference set $\{ 0, \infty \}$ to the two branches of $\bar{\pi}^{-1}(L)$ at the node $\bar{\pi}^{-1}(q_1)$ 
(with the underlying weak marking) a \textit{strong marking} of $B$. 
This is equivalent to a choice of a branch of $\bar{\pi}^{-1}(L)$ at $\bar{\pi}^{-1}(q_1)$. 
Often we denote such a marking by $\mu$. 
\end{definition}

Geometrically, the two branches of $\bar{\pi}^{-1}(L)$ at $\bar{\pi}^{-1}(q_1)$ give 
two points in the projectivized tangent cone of $\bar{X}$ at $\bar{\pi}^{-1}(q_1)$ 
(which is the exceptional curve in the minimal resolution). 
For each weakly-marked sextic, 
there are two choices of promoting the weak marking to a strong marking.  
We call these two strong markings to be \textit{conjugate}.

\subsection{The main construction}\label{ssec: from sextic to cycle}

Let $(B, \mu)$ be a strongly-marked plane sextic. 
We keep the notations $\bar{X}$, $L$, $q_1$, $q_2$ in \S \ref{ssec: marking}. 
Let $X\to \bar{X}$ be the minimal resolution of the rational double points of $\bar{X}$ 
and $\iota$ be the associated non-symplectic involution of $X$. 
In this subsection we construct a higher Chow cycle on $X$ from $(B, \mu)$. 

Let $Z_0\subset X$ be the strict transform of $\bar{\pi}^{-1}(L)$ in $X$. 
Then $Z_0$ is a $(-2)$-curve and gives a normalization of $\bar{\pi}^{-1}(L)$. 
Let $Z_1\subset X$ be the $(-2)$-curve over the $A_1$-point $\bar{\pi}^{-1}(q_1)$. 
Then $Z_0$ intersects transversely with $Z_1$ at the two points 
corresponding to the two branches of $\bar{\pi}^{-1}(L)$ at $\bar{\pi}^{-1}(q_1)$. 
According to our strong marking, we denote these two points by $p_{0}$ and $p_{\infty}$. 
See Figure 1. 

\begin{figure}[h]\label{figure: main}
\includegraphics[height=32mm, width=80mm]{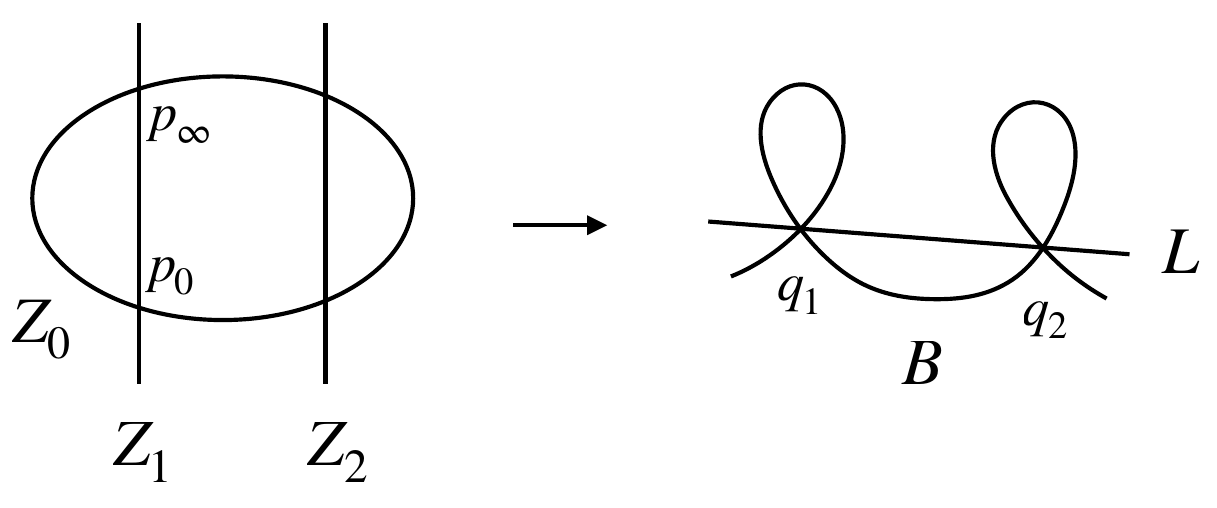}
\caption{}
\end{figure}

Now, since $Z_0$, $Z_1$ are smooth rational curves, 
we can choose a rational function $f_0$ on $Z_0$ and a rational function $f_1$ on $Z_1$ such that 
\begin{equation*}
{\rm div}(f_0) = (p_0) - (p_{\infty}), \qquad {\rm div}(f_1) = (p_{\infty}) - (p_0). 
\end{equation*}
Then, as explained in \S \ref{ssec: CH21}, the formal sum 
\begin{equation*}
\xi \: = \: (Z_0, f_0) + (Z_1, f_1) 
\end{equation*}
gives a higher Chow cycle of type $(2, 1)$ on $X$. 
We call $\xi$ a higher Chow cycle attached to the strongly-marked sextic $(B, \mu)$. 

It should be noted that we have freedom of choice of the rational functions $f_0$, $f_1$, 
but the ambiguity is only multiplication by some nonzero constants $\alpha_0, \alpha_1 \in {\C}^{\ast}$ 
on $f_0, f_1$ respectively. 
Its effect on the higher Chow cycle is to add the decomposable cycle 
\begin{equation}\label{eqn: ambiguity decomposable}
(Z_0, \alpha_0) + (Z_1, \alpha_1)
\end{equation}
to $\xi$. 
In particular, the indecomposable part of $\xi$ depends only on the marked sextic $(B, \mu)$. 
Moreover, we have the following. 

\begin{lemma}\label{lem: anti-inv regulator}
The image of the regulator $\nu(\xi)$ of $\xi$ by the projection $J(X)\to J(X, \iota)$ 
depends only on the marked sextic $(B, \mu)$. 
\end{lemma}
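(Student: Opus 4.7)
The plan is to track exactly how the higher Chow cycle $\xi$ depends on the choices made in its construction, and show that the ambiguity is killed by the projection $J(X)\to J(X,\iota)$. As observed in the paragraph preceding the lemma, once the strongly-marked sextic $(B,\mu)$ is fixed (which fixes in particular the curves $Z_0,Z_1$ and the intersection points $p_0,p_\infty$), the only remaining freedom is the rescaling $(f_0,f_1)\mapsto (\alpha_0 f_0,\alpha_1 f_1)$ by constants $\alpha_0,\alpha_1\in {\C}^{\ast}$, which modifies $\xi$ by the decomposable cycle $(Z_0,\alpha_0)+(Z_1,\alpha_1)$ of \eqref{eqn: ambiguity decomposable}. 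By the formula \eqref{eqn: regulator decomposable cycle} for regulators of decomposable cycles, the image of this ambiguity under $\nu$ is
\[
[Z_0]\otimes \alpha_0 + [Z_1]\otimes \alpha_1 \; \in\; \mathrm{NS}(X)\otimes_{{\Z}}{\C}^{\ast}\; \subset\; J(X).
\]

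Next I would verify that both classes $[Z_0]$ and $[Z_1]$ lie in the invariant sublattice $H_{+}(X,\iota)$. The curve $Z_1$ is the exceptional $(-2)$-curve of the resolution $X\to \bar{X}$ over the $A_1$-point $\bar{\pi}^{-1}(q_1)$; since that point is fixed by the covering involution, $\iota$ stabilizes $Z_1$ setwise and hence $\iota^{\ast}[Z_1]=[Z_1]$. The curve $Z_0$ is the strict transform of $\bar{\pi}^{-1}(L)$, and $\bar{\pi}^{-1}(L)\subset \bar{X}$ is $\iota$-stable because it is the preimage of $L\subset {\plane}$ under the double cover; hence $Z_0$ is $\iota$-stable as well, giving $[Z_0]\in H_{+}(X,\iota)$.

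Finally, since the kernel of $J(X)\twoheadrightarrow J(X,\iota)$ was identified in \S \ref{ssec: Jacobian} with $H_{+}(X,\iota)\otimes_{{\Z}}{\C}^{\ast}$, the class $[Z_0]\otimes \alpha_0+[Z_1]\otimes \alpha_1$ vanishes under the projection. Therefore the image of $\nu(\xi)$ in $J(X,\iota)$ is independent of the choice of $(f_0,f_1)$, and depends only on $(B,\mu)$. The argument is essentially mechanical once the structure of the ambiguity is identified; the only point that genuinely needs to be checked is the $\iota$-stability of $Z_0$ and $Z_1$ \emph{individually}. This could a priori fail in variants of the construction where two exceptional curves are swapped by $\iota$, but in our setup each of $Z_0, Z_1$ sits over an $\iota$-stable locus on $\bar{X}$ (the fixed point $\bar{\pi}^{-1}(q_1)$ and the preimage of $L$, respectively), so this last verification is immediate.
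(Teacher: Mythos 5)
Your proof is correct and follows essentially the same route as the paper's: identify the ambiguity as the decomposable cycle $(Z_0,\alpha_0)+(Z_1,\alpha_1)$, note that $[Z_0],[Z_1]\in H_{+}(X,\iota)$ since both curves are $\iota$-stable, and conclude via the identification of the kernel of $J(X)\to J(X,\iota)$ with $H_{+}(X,\iota)\otimes_{\Z}{\C}^{\ast}$. The only difference is that you spell out the $\iota$-stability of $Z_0$ and $Z_1$ in more detail than the paper does, which is a harmless (indeed welcome) elaboration.
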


\begin{proof}
Since the supports $Z_0$, $Z_1$ of $(Z_0, \alpha_0)$, $(Z_1, \alpha_1)$ are curves preserved by $\iota$, 
their classes are contained in $H_{+}(Z, \iota)$. 
In view of \eqref{eqn: regulator decomposable cycle}, we see that 
the regulator of the decomposable cycle \eqref{eqn: ambiguity decomposable} 
is contained in the subgroup 
$H_{+}(Z, \iota)\otimes_{{\Z}}{\C}^{\ast}$ of $J(X)$. 
Therefore the image of $\nu(\xi)$ in $J(X, \iota)$ 
does not depend on the choice of $f_0, f_1$. 
\end{proof}

We denote by 
\begin{equation}\label{eqn: anti-invariant regulator}
\nu_{-}(\xi) = \nu_{-}(B, \mu) \: \: \: \in \: J(X, \iota) 
\end{equation}
the image of $\nu(\xi)$ by $J(X)\to J(X, \iota)$ and call it the \textit{anti-invariant regulator} of $\xi$. 
In this way, by considering $J(X, \iota)$, 
we could extract a part of the regulator of $\xi$ which depends only on $(B, \mu)$. 
This will be crucial in our later constructions.  

We also note the following. 

\begin{lemma}\label{lem: regulator conjugate}
Let $\mu^{\dag}$ be the strong marking of $B$ conjugate to $\mu$. 
Then we have 
$\nu_{-}(B, \mu^{\dag})=-\nu_{-}(B, \mu)$. 
In particular, $\nu_{-}(B, \mu)$ is non-torsion if and only if $\nu_{-}(B, \mu^{\dag})$ is so. 
\end{lemma}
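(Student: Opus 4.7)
The plan is to trace through what changes when we swap $\mu$ for its conjugate $\mu^{\dag}$, and show that the associated cycle merely changes sign in $\mathrm{CH}^2(X,1)$. Since $\mu$ is (equivalent to) a labelling of the two branches of $\bar{\pi}^{-1}(L)$ at $\bar{\pi}^{-1}(q_1)$ by $\{0,\infty\}$, passing to $\mu^{\dag}$ simply interchanges the labels of the two intersection points $p_0, p_\infty\in Z_0\cap Z_1$. Concretely, a pair of rational functions $(f_0^{\dag},f_1^{\dag})$ producing a cycle $\xi^{\dag}$ for $(B,\mu^{\dag})$ must satisfy
\begin{equation*}
\mathrm{div}(f_0^{\dag}) = (p_\infty)-(p_0), \qquad \mathrm{div}(f_1^{\dag}) = (p_0)-(p_\infty),
\end{equation*}
which are the negatives of the divisors required for $(f_0,f_1)$. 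The most economical choice is therefore $f_0^{\dag}=f_0^{-1}$ and $f_1^{\dag}=f_1^{-1}$.

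Next I would use the additive structure on $\bigoplus_{Z}\C(Z)^\times$ inherited from multiplication of rational functions: since $(Z_j,f_j)+(Z_j,f_j^{-1})=(Z_j,1)$ vanishes in the middle term of the Gersten-type complex, we get $(Z_j,f_j^{-1})=-(Z_j,f_j)$ for $j=0,1$. Summing gives
\begin{equation*}
\xi^{\dag} \: = \: (Z_0,f_0^{-1})+(Z_1,f_1^{-1}) \: = \: -\xi \quad \text{in } \mathrm{CH}^2(X,1).
\end{equation*}
Applying the regulator map yields $\nu(\xi^{\dag})=-\nu(\xi)$, and projecting to $J(X,\iota)$ produces $\nu_{-}(B,\mu^{\dag})=-\nu_{-}(B,\mu)$, as desired. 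The final torsion/non-torsion assertion is then immediate, since negation is a group automorphism of $J(X,\iota)$ and thus preserves the torsion subgroup.

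There is essentially no obstacle here: the only point that deserves care is checking that the specific choices $f_0^{\dag}=f_0^{-1}$, $f_1^{\dag}=f_1^{-1}$ are permissible (which follows from Lemma \ref{lem: anti-inv regulator}, since the anti-invariant regulator is independent of the choice of rational functions realizing the required divisors). After that, the whole argument reduces to the elementary identity $(Z,f^{-1})=-(Z,f)$ in the cycle presentation and the functoriality of the projection $J(X)\twoheadrightarrow J(X,\iota)$.
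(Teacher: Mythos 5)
Your proof is correct and follows essentially the same route as the paper: both observe that the conjugate marking exchanges the labels of $p_0$ and $p_\infty$, so that $\xi^{\dag}=(Z_0,f_0^{-1})+(Z_1,f_1^{-1})=-\xi$ in $\mathrm{CH}^2(X,1)$, and then apply the regulator and the projection to $J(X,\iota)$. Your extra remark that the choice $f_j^{\dag}=f_j^{-1}$ is harmless by Lemma \ref{lem: anti-inv regulator} is a correct and welcome clarification, but not a different argument.
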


\begin{proof}
For $\mu^{\dag}$, the labeling of $p_0$ and $p_{\infty}$ are exchanged. 
Hence 
$\xi^{\dag}  = (Z_0, f_0^{-1})+(Z_1, f_1^{-1})$ 
gives a higher Chow cycle attached to $(B, \mu^{\dag})$. 
Since $\xi^{\dag}=-\xi$ in ${\rm CH}^{2}(X, 1)$, 
we have $\nu(\xi^{\dag})=-\nu(\xi)$ and so $\nu_{-}(B, \mu^{\dag})=-\nu_{-}(B, \mu)$. 
\end{proof}

This shows that $\pm \nu_{-}(\xi)\in J(X, \iota)/\!-\!1$ depends only on the underlying 
\textit{weakly}-marked sextic.

\begin{remark}\label{rmk: elliptic fibration}
We can also see $\xi$ from the viewpoint of elliptic fibration. 
The divisor class of $Z_0+Z_1$ is an isotropic, nef and primitive vector of ${\rm NS}(X)$. 
Hence it is the class of a smooth elliptic curve and defines an elliptic fibration $X\to {\proj}^1$. 
Explicitly, this is given by the pullback of the pencil of lines on ${\plane}$ passing through $q_2$. 
Then $Z_0+Z_1$ is a singular fiber of type $I_2$ of this elliptic fibration. 
Thus the construction of $\xi$ can be seen as a standard one attached to an $I_n$-fiber of an elliptic surface. 
On the other hand, it is usually not easy to prove indecomposability of such a cycle (see the introduction of \cite{CDKL}). 
Our proof (\S \ref{sec: recipe}) does not make use of the elliptic fibration. 
\end{remark}

\section{The whole process}\label{sec: recipe}

Now we proceed to constructing our families of higher Chow cycles and proving their nontriviality. 
The whole process can be separated into the case-by-case part and the uniform part, 
and has an inductive structure. 
In this \S \ref{sec: recipe} we carry out the uniform part, 
postponing the case-by-case part to the next \S \ref{sec: case-by-case}. 

\subsection{Summary of the whole process}\label{ssec: recipe summary}

Let $3\leq r \leq 18$. 
As announced in \S \ref{ssec: K3 classify}, 
for each such $r$, we set the invariant $a$ by $a=r$ when $r\leq 11$, and $a=22-r$ when $r\geq 11$. 
We also set the parity $\delta$ by $\delta=1$ when $r\leq 17$, and $\delta=0$ when $r=18$. 
Our goal in this paper is to 
\begin{enumerate}
\item construct a family $(X, \iota)\to \hat{U}$ of 2-elementary $K3$ surfaces with this invariant $(r, a, \delta)$ 
which dominates the moduli space ${\moduli}$, 
\item construct a family $\xi$ of higher Chow cycles on $X\to \hat{U}$, and 
\item prove that $(\xi_{u})_{ind}$ is non-torsion for very general $u\in \hat{U}$. 
\end{enumerate}


The construction of the family $(X, \iota, \xi)\to \hat{U}$ is divided into the part 
that requires case-by-case construction depending on $r$, 
and the part that can be done uniformly. 
After constructing the family, we prove its nontriviality by induction on $20-r$. 
The starting case $r=18$ was essentially done in \cite{Sa} (see \S \ref{ssec: r=18}). 
What we do here is to run the induction, and this argument is uniform. 

In the logical order, the process proceeds as follows. 
We write $\hat{U}=\hat{U}_{r}$ when we want to specify the invariant $r$. 

\begin{enumerate}
\item In each subsection of \S \ref{sec: case-by-case}, we take a parameter space $\tilde{U}_{r}$ of 
certain plane sextics with weak marking. 
This is the case-by-case part, and is summarized in \S \ref{ssec: recipe case-by-case}. 
\item In \S \ref{ssec: recipe uniform}, we present a process by which 
we can systematically produce from $\tilde{U}_{r}$ 
a family $(X, \iota, \xi)\to \hat{U}_{r}$ of 2-elementary $K3$ surfaces and higher Chow cycles on them 
over a double cover $\hat{U}_{r}$ of $\tilde{U}_{r}$.  
\item We observe that the family over $\hat{U}_{r+1}$ is a degeneration of the family over $\hat{U}_{r}$.  
This is realized by constructing a partial compactification $\hat{U}_{r}'$ of $\hat{U}_{r}$, 
also in \S \ref{ssec: recipe uniform}. 
\item Suppose that we have proved nontriviality of our cycles over $\hat{U}_{r+1}$. 
\item Then we prove nontriviality of our cycles over $\hat{U}_{r}$ 
by running the induction argument in \S \ref{ssec: nontriviality}. 
In fact, what we actually prove inductively is non-torsionness of the anti-invariant normal function of our cycles 
(Theorem \ref{thm: nontrivial normal function}). 
This passage is crucial in our proof. 
\item Repeat the process $(4) \to (5)$ until $r=3$ where our construction stops. 
\end{enumerate}


In the rest of \S \ref{sec: recipe}, we carry out the uniform process (2) -- (6) in full detail, 
modulo the ``input'' step (1) which will be worked out in \S \ref{sec: case-by-case}. 
We adopt this style of presentation for avoiding repetition of similar argument and 
for clarifying the picture of the whole process. 

\subsection{Case-by-case part of the construction}\label{ssec: recipe case-by-case}

In \S \ref{sec: case-by-case}, we will 
define for each $r$ a space $\tilde{U}_{r}$ which parametrizes certain marked plane sextics, 
together with its partial compactification $\tilde{U}_{r}'$. 
These are the inputs for the whole process. 
In this \S \ref{ssec: recipe case-by-case}, we summarize a common story of 
the constructions in \S \ref{sec: case-by-case} in some detail so that 
we can proceed to the next \S \ref{ssec: recipe uniform}. 
The family of cycles will get produced at the end of \S \ref{ssec: recipe uniform}. 

Looking over the whole construction, 
let us say in advance that there will be two types of technical complication: 
\begin{itemize}
\item Taking an etale cover of a parameter space, by which the sextics are endowed with certain markings. 
\item Taking a partial compactification of a parameter space, 
whose boundary is essentially the parameter space constructed in the previous step. 
\end{itemize}
The first type of construction is necessary for defining a family of higher Chow cycles globally. 
The second type of construction is a device for running our induction.

\subsubsection{Construction of $U_r\subset U_r'$}\label{sssec: U}

The first step of the construction is to take a parameter space $U_{r}\subset {\sextic}$ of a certain type of singular plane sextics. 
The sextics will have at most nodes and ordinary triple points as the singular points; 
when $r\leq 16$, they even have at most nodes. 
The locus $U_{r}$ is irreducible and locally closed in ${\sextic}$, and the family of sextics over $U_{r}$ is equisingular, 
i.e., they have the same type and number of singularities. 
The 2-elementary $K3$ surfaces associated to the sextics in $U_{r}$ have the desired invariant $(r, a, \delta)$, 
and the map $U_{r}\to {\moduli}$ to the moduli space will be dominant. 

At the same time, except for the starting case $r=18$, we will also take a partial compactification 
$U_{r}\subset U_{r}'$ of $U_{r}$ inside its closure in ${\sextic}$ and observe that 
its boundary $U_{r}'-U_{r}$ is the parameter space $U_{r+1}$ 
we have already constructed in the step $r+1$. 
Thus we have $U_{r}'=U_{r}\cup U_{r+1}$. 

In the rest of \S \ref{sec: recipe} and \S \ref{sec: case-by-case}, 
for $U_r \subset U_r'$ and their coverings, 
we will use the same notations even after removing some exceptional locus (not the whole boundary).

\subsubsection{Construction of $\tilde{U}_{r}\subset \tilde{U}_{r}'$}\label{sssec: Utilde}

The second step of the construction is to take an etale cover $\tilde{U}_{r}\to U_{r}$ of $U_{r}$ 
by attaching a certain type of markings to the sextics. 
Thus $\tilde{U}_r$ parametrizes marked sextics, 
with $\tilde{U}_{r}\to U_{r}$ given by forgetting the markings. 
The type of marking depends on $r$, but in any case 
it induces a weak marking in the sense of Definition \ref{def: weak marking} in a specific manner. 
The space $\tilde{U}_{r}$ will be irreducible. 

At the same time, except for the starting case $r=18$, 
we will also take a partial compactification $\tilde{U}_{r}\subset \tilde{U}_{r}'$ of $\tilde{U}_{r}$ 
which still parametrizes marked sextics (degenerated further at the boundary) 
such that $\tilde{U}_{r}\to U_{r}$ extends to $\tilde{U}_{r}'\to U_{r}'$. 
We will have an etale map $\tilde{U}_{r+1} \to \tilde{U}_{r}'-\tilde{U}_{r}$ (often of degree $1$) 
which is given by converting the markings for $\tilde{U}_{r+1}$ to 
the limit markings for $\tilde{U}_{r}'-\tilde{U}_{r}$. 
The induced weak markings will agree, namely 
the family of weakly-marked sextics over $\tilde{U}_{r+1}$ coincides with 
the pullback of the one over $\tilde{U}_{r}'-\tilde{U}_{r}$ by this etale map. 

In this way, at the end of each subsection of \S \ref{sec: case-by-case}, 
we will have  parameter spaces which fit into the commutative diagram  
\begin{equation}\label{eqn: CD Ur inductive}
\xymatrix{
\tilde{U}_{r}  \ar@{^{(}-_>}[r] \ar[d] & \tilde{U}_{r}' \ar[d] & \tilde{U}_{r+1} \ar[d] \ar[l]  \\ 
U_{r}   \ar@{^{(}-_>}[r] & U_{r}'  & U_{r+1}  \ar@{_{(}-_>}[l] 
}
\end{equation}


\subsection{Uniform part of the construction}\label{ssec: recipe uniform}

Having constructed a parameter space $\tilde{U}=\tilde{U}_{r}$ of marked sextics, 
the next thing to do is to produce a family of $K3$ surfaces and higher Chow cycles on them. 
The $K3$ family can be constructed over $\tilde{U}$ (in fact over $U$), 
but to construct a cycle family we need to take the base change to a double cover of $\tilde{U}$. 
Thus this \S \ref{ssec: recipe uniform} consists of three steps: 
\begin{enumerate}
\item Construction of a $K3$ family (\S \ref{sssec: K3 family}) 
\item Base change to a double cover $\hat{U}\to \tilde{U}$ (\S \ref{sssec: Uhat}) 
\item Construction of a higher Chow cycle family (\S \ref{sssec: cycle family}) 
\end{enumerate}

\subsubsection{Construction of a $K3$ family}\label{sssec: K3 family}

We have the universal plane sextic over ${\sextic}$ as a divisor of ${\sextic}\times {\plane}$. 
This divisor is linearly equivalent to $\mathcal{O}_{{\sextic}}(1) \boxtimes \mathcal{O}_{{\plane}}(6)$. 
Restricting this universal family over $U$ and pulling it back by $\tilde{U}\to U$, 
we obtain the universal family $B\to \tilde{U}$ of plane sextics over $\tilde{U}$ 
as a divisor of $\tilde{U}\times {\plane}$. 
Removing a general hyperplane section from $U$ if necessary (we denote the complement again by $U$), 
we can kill the contribution from $\mathcal{O}_{{\sextic}}(1)$ 
so that we may assume that 
$B$ is linearly equivalent to $p_{2}^{\ast}\mathcal{O}_{{\plane}}(6)$ 
where $p_2\colon \tilde{U}\times {\plane} \to {\plane}$ is the projection. 
Then we can take the double cover 
\begin{equation*}
\bar{\pi} : \bar{X}\to \tilde{U}\times {\plane} 
\end{equation*}
branched over $B$ 
inside the total space of $p_{2}^{\ast}\mathcal{O}_{{\plane}}(3)$. 
The projection $\bar{X}\to \tilde{U}$ is a family of singular $K3$ surfaces with 
$A_{1}$-points and $D_{4}$-points over the nodes and triple points of the sextic family $B\to \tilde{U}$ respectively. 
Since $B\to \tilde{U}$ is equisingular, $\bar{X}\to \tilde{U}$ is also equisingular. 
Hence we can take the minimal resolution in family: 
\begin{equation*}
X\to \bar{X}. 
\end{equation*}
The covering transformation of $\bar{X}\to \tilde{U}\times {\plane}$ 
extends to an involution $\iota$ of $X$ by the minimality of the $K3$ fibers. 
On each fiber, it is non-symplectic. 
In this way we obtain a family 
\begin{equation*}
(X, \iota) \to \tilde{U} 
\end{equation*}
of 2-elementary $K3$ surfaces over $\tilde{U}$.


\subsubsection{The double cover $\hat{U}\to \tilde{U}$}\label{sssec: Uhat}

The space $\tilde{U}$ parametrizes plane sextics with weak marking. 
In order to define a family of higher Chow cycles, 
we want to promote the weak markings to strong markings globally. 
This is made possible after taking a double cover of $\tilde{U}$ as follows. 

By our weak marking over $\tilde{U}$, 
we have two specified nodes of $B_{u}$ for each $u\in \tilde{U}$. 
They form two disjoint sections of $B\to \tilde{U}$, 
say $N_1, N_{2}\subset B$. 
The lines joining these two nodes form a smooth ${\proj}^{1}$-bundle over $\tilde{U}$, 
say $L\subset \tilde{U}\times {\plane}$. 
After shrinking $U$ to a Zariski open set, we may assume that 
these nodes satisfy the genericity Condition \ref{condition: genericity}. 
Then $\bar{\pi}^{-1}(N_1)$ and $\bar{\pi}^{-1}(N_2)$ are families of $A_{1}$-singularities of $\bar{X}\to \tilde{U}$, 
and $\bar{\pi}^{-1}(L)$ is a family of two-nodal rational curves 
with nodes at $\bar{\pi}^{-1}(N_1)$ and  $\bar{\pi}^{-1}(N_2)$. 
In particular, for each $u\in \tilde{U}$, the curve $\bar{\pi}^{-1}(L)_{u}$ has two branches at $\bar{\pi}^{-1}(N_1)_{u}$. 
Choosing one of them is the same as promoting our weak marking to a strong marking. 
There are two such choices. 
This defines an etale double cover 
\begin{equation*}
\hat{U} \to \tilde{U} 
\end{equation*}
of $\tilde{U}$. 
By construction, the sextic family pulled back to $\hat{U}$ is globally endowed with a strong marking. 
We can show that $\hat{U}$ is connected (Remark \ref{rmk: connected}), 
but logically this property is not necessary for our purpose.  

We pullback $X\to \bar{X}\to \tilde{U}\times {\plane}$ by $\hat{U}\to \tilde{U}$, 
and abusing notation, denote it again by 
\begin{equation*}
\pi : X \to \bar{X} \stackrel{\bar{\pi}}{\to} \hat{U}\times {\plane}. 
\end{equation*}
We also use the same notations 
\begin{equation*}
B, \; L, \; N_{i} \: \:  \subset \: \hat{U}\times {\plane} 
\end{equation*}
for the pullback of $B, L, N_{i} \subset \tilde{U}\times {\plane}$ by $\hat{U}\to \tilde{U}$. 

Before going ahead, let us observe that 
$\hat{U}$ can be extended to a double cover $\hat{U}'\to \tilde{U}'$ of 
the partial compactification of $\tilde{U}$ defined in \S \ref{sssec: Utilde}. 
Indeed, construction of the singular double cover $\bar{X}$  
can be extended over $\tilde{U}'$, 
as the universal sextic is still defined over $\tilde{U}'$ and hence over $\tilde{U}'$. 
This produces a family of singular $K3$ surfaces over $\tilde{U}'$ 
which acquire additional singularities at the boundary $\tilde{U}'-\tilde{U}$. 
Then $N_{1}, N_{2}$ and $L$ extend over $\tilde{U}'$ by construction, 
and they are disjoint from the new singularities. 
Thus the family $\bar{\pi}^{-1}(L)$ of two-nodal rational curves extends over $\tilde{U}'$, 
so we can define an etale double cover 
\begin{equation*}
\hat{U}'\to \tilde{U}' 
\end{equation*} 
in the same way as for $\hat{U}\to \tilde{U}$. 

At the boundary $\tilde{U}' - \tilde{U}=\tilde{U}_{r}'-\tilde{U}_{r}$,  
this construction of double cover takes the same process as $\hat{U}_{r+1}\to \tilde{U}_{r+1}$ 
by the coincidence of weak markings. 
This means that the map $\hat{U}_{r+1}\to \tilde{U}_{r+1}$ is the base change of  
$\hat{U}_{r}' - \hat{U}_{r} \to \tilde{U}_{r}' - \tilde{U}_{r}$ 
by the etale map $\tilde{U}_{r+1}\to \tilde{U}_{r}' - \tilde{U}_{r}$. 
In this way we obtain the extended parameter space $\hat{U}_{r}'$ of strongly-marked sextics 
with inductive structure   
\begin{equation}\label{eqn: extend Utilde}
\hat{U}_{r}  \hookrightarrow \hat{U}_{r}'  \leftarrow \hat{U}_{r+1}, 
\end{equation}
where $\hat{U}_{r+1}$ is an etale cover of $\hat{U}_{r}'-\hat{U}_{r}$ (often of degree $1$). 
The pullback of the family of strongly-marked sextics over $\tilde{U}_{r}' - \tilde{U}_{r}$ 
by $\tilde{U}_{r+1}\to \tilde{U}_{r}' - \tilde{U}_{r}$ is the one already constructed over $\tilde{U}_{r+1}$.

\begin{remark}\label{rmk: connected}
The connectedness of $\hat{U}$ can be seen inductively as follows. 
The connectedness in the starting case $r=18$ follows from \cite{Sa} 
(see Remark \ref{rmk: irreducible 18}). 
Then, by \eqref{eqn: extend Utilde}, the connectedness of $\hat{U}_{r+1}$ implies that of 
the boundary $\hat{U}_{r}'-\hat{U}_{r}$ of $\hat{U}_{r}'$. 
Since $\tilde{U}_{r}'$ is connected, this implies that $\hat{U}_{r}'$ is connected. 
\end{remark}

\subsubsection{Construction of a cycle family}\label{sssec: cycle family} 

After the base change to $\hat{U}$, 
we can now define a family of higher Chow cycles globally. 
Let $Z_{0}\subset X$ be the strict transform of $\bar{\pi}^{-1}(L)\subset \bar{X}$ in $X$ 
and let $Z_{1}=\pi^{-1}(N_{1})$. 
Then $Z_{0}, Z_{1}$ are smooth families of $(-2)$-curves meeting transversely at two points in each fiber. 
These two points correspond to the two branches of $\bar{\pi}^{-1}(L)_{u}$ at $\bar{\pi}^{-1}(N_{1})_{u}$. 
By our construction of $\hat{U}$, these two branches are globally distinguished over $\hat{U}$, 
so we have 
\begin{equation*}
Z_{0} \cap Z_{1} = S\!_{0} \sqcup S\!_{\infty} 
\end{equation*}
with each $S\!_{\ast}$ being a section of both $Z_{i}\to \hat{U}$. 
This shows in particular that $Z_{0}, Z_{1}$ are smooth ${\proj}^1$-bundles over $\hat{U}$ in the Zariski topology. 
After shrinking $\tilde{U}$ to a Zariski open set, 
we can take isomorphisms 
\begin{equation*}
i_{0} : Z_{0} \to {\proj}^1\times \hat{U}, \qquad
i_{1} : Z_{1} \to {\proj}^1\times \hat{U}, 
\end{equation*}
over $\hat{U}$ such that 
\begin{equation*}
i_0(S\!_0)=(0)\times \hat{U}, \; \;   
i_0(S\!_{\infty})=(\infty)\times \hat{U},  \; \;    
i_1(S\!_0)=(\infty)\times \hat{U},  \; \;    
i_1(S\!_{\infty})=(0)\times \hat{U}. 
\end{equation*} 
Let $z$ be the standard rational function on ${\proj}^1$ with ${\rm div}(z)=(0)-(\infty)$. 
Then 
\begin{equation*}
f_{0} = i_{0}^{\ast}z, \qquad f_{1} = i_{1}^{\ast}z  
\end{equation*}
are rational functions on $Z_{0}$, $Z_{1}$ with 
${\rm div}(f_{0})=S\!_{0}-S\!_{\infty}$ and ${\rm div}(f_{1})=S\!_{\infty}-S\!_{0}$ respectively. 
In this way we obtain a family 
\begin{equation*}
\xi = (Z_0, f_0) + (Z_1, f_1) 
\end{equation*}
of higher Chow cycles on the family $X\to \hat{U}$ of smooth $K3$ surfaces. 
The rest of \S \ref{sec: recipe} is devoted to 
proving nontriviality of these cycles (Theorem \ref{thm: nontrivial}).

\subsection{A degeneration lemma}\label{ssec: degeneration lemma}

In order to run our induction, 
we need a lemma with which we are able to deduce nontriviality over $\hat{U}$ 
from that over the boundary $\hat{U}'-\hat{U}$. 
Our purpose in this \S \ref{ssec: degeneration lemma} is to prepare such a lemma. 
Let us state this lemma in a self-contained way. 

Let $\Delta$ be the unit disc and $\Delta^{\ast}=\Delta\backslash \{ 0 \}$. 
Let $B\subset \Delta\times{\plane}$ be a family of plane sextics with at most simple singularities over $\Delta$ 
which is equisingular over $\Delta^{\ast}$. 
We assume that $B$ has two specified families $N_1, N_2\subset B$ of nodes which are disjoint. 
Thus $N_1, N_2$ are sections of $B\to \Delta$ such that 
$(N_1)_t$, $(N_2)_t$ are distinct nodes of $B_{t}$ for every $t\in \Delta$ (including $t=0$). 
Let $L_t\subset \{ t \}\times {\plane}$ be the line joining $(N_1)_t$ and $(N_2)_t$, 
and let $L=\cup_{t\in \Delta}L_{t}$. 
We assume that the genericity Condition \ref{condition: genericity} is satisfied for every $t\in \Delta$. 
In particular, the new singularity over $t=0$ appears outside $L_{0}$. 
Let $\bar{\pi}\colon \bar{X}\to \Delta\times{\plane}$ be the double cover branched over $B$. 
Then $\bar{\pi}^{-1}(L)$ is a family of two-nodal rational curves.  
Since $\Delta$ is simply connected, the two branches of $\bar{\pi}^{-1}(L_{t})$ 
at the node $\bar{\pi}^{-1}((N_{1})_{t})$ are distinguished over $\Delta$. 
Choosing one of them, we obtain a strong marking of the sextic family $B\to \Delta$, 
say $\mu=(\mu_{t})_{t}$. 

For each $t \in \Delta$, let $(X_{t}, \iota_{t})$ be the 2-elementary $K3$ surface 
obtained as the minimal resolution of $\bar{X}_{t}$. 
We consider the anti-invariant regulator  
\begin{equation*}\label{eqn: def nu-}
\nu_{-}(B_t, \mu_t) \: \: \in J(X_t, \iota_t) 
\end{equation*}
of the higher Chow cycles attached to $(B_t, \mu_t)$ as defined in \eqref{eqn: anti-invariant regulator}. 
When $B_t$ degenerates further at $t=0$, 
the Jacobian $J(X_0, \iota_0)$ has in general smaller dimension than $J(X_t, \iota_t)$ for $t\ne 0$. 

We can now state our degeneration lemma. 

\begin{lemma}\label{lem: degeneration}
If $\nu_{-}(B_0, \mu_0)$ is non-torsion, then $\nu_{-}(B_t, \mu_t)$ is non-torsion for very general $t\in \Delta^{\ast}$, 
i.e., for $t\in \Delta^{\ast}$ in the complement of countably many points. 
\end{lemma}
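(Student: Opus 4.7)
The plan is to argue by contradiction, combining the rigidity of normal functions on $\Delta^{\ast}$ with a specialization argument at $t=0$.

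\textbf{Setting up the global cycle and normal function.} Since $\Delta$ is simply connected, the strong marking $\mu$ globally trivializes the ambiguity that required the etale double cover $\hat{U}\to \tilde{U}$ in \S \ref{sssec: Uhat}, so the construction of \S \ref{sssec: cycle family} runs directly over $\Delta$. The $(-2)$-curves $Z_{0,t}, Z_{1,t}$ and the rational functions $f_{0,t}, f_{1,t}$ extend holomorphically over the entire disc $\Delta$, thanks to the assumption that the new singularity of $\bar{X}_0$ lies off $L_0$ and that Condition \ref{condition: genericity} holds for every $t$. This produces a family $\xi_t = (Z_{0,t}, f_{0,t})+(Z_{1,t}, f_{1,t})$ of higher Chow cycles defined for every $t\in \Delta$, with $\nu_{-}(\xi_t)=\nu_{-}(B_t, \mu_t)$. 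Over $\Delta^{\ast}$ the K3 family is smooth, the anti-invariant cohomologies form a local system underlying a variation of Hodge structures of weight $2$, and $\tilde{\nu}_{-}(t):=\nu_{-}(\xi_t)$ gives a holomorphic section of the associated anti-invariant Jacobian fibration $\mathcal{J}_{-}\to \Delta^{\ast}$.

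\textbf{Rigidity step.} Suppose for contradiction that $\nu_{-}(B_t, \mu_t)$ is torsion for an uncountable set $T\subset \Delta^{\ast}$. A standard countability argument produces an integer $N\geq 1$ and an accumulating subset $T'\subset T$ with $N\tilde{\nu}_{-}|_{T'}\equiv 0$. Since the $N$-torsion subgroups of $\mathcal{J}_{-}$ form a finite etale cover of $\Delta^{\ast}$ and $N\tilde{\nu}_{-}$ is a holomorphic section meeting the zero section along the accumulating set $T'$, analytic continuation forces $N\tilde{\nu}_{-}\equiv 0$ on all of $\Delta^{\ast}$.

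\textbf{Specialization at $t=0$ (the main obstacle).} It remains to deduce $N\nu_{-}(B_0, \mu_0)=0$, contradicting the hypothesis. The delicate point is that $J(X_0, \iota_0)$ typically has strictly smaller dimension than $J(X_t, \iota_t)$ for $t\neq 0$, so $\mathcal{J}_{-}$ does not extend as a smooth Jacobian fibration across $0$. The idea is to bypass this via Lemma \ref{lem: H- vs IH} by identifying $H_{-}(X_t, \iota_t)_{\Q}$ with the primitive part of $H^2(\bar{X}_t, \Q)$ and working on the singular model $\bar{X}\to \Delta$. Since every fiber $\bar{X}_t$ has at most rational double points, the primitive cohomology varies in a family of Hodge structures of weight $2$ over the whole disc $\Delta$ with only mild degeneration at $0$ (this is the stability property (1) of $J(X,\iota)$ highlighted in the introduction). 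This yields a relative Jacobian $\bar{\mathcal{J}}_{-}\to \Delta$ extending $\mathcal{J}_{-}$, together with a natural surjection $\bar{\mathcal{J}}_{-,0}\twoheadrightarrow J(X_0, \iota_0)$ whose kernel is generated by the classes of the new exceptional divisors of $X_0\to \bar{X}_0$. Because the cycle $\xi$ is defined globally over $\Delta$, one expects $\tilde{\nu}_{-}$ to extend to a holomorphic section of $\bar{\mathcal{J}}_{-}$ over $\Delta$ whose value at $0$ projects to $\nu_{-}(B_0, \mu_0)$; combining this with the vanishing $N\tilde{\nu}_{-}\equiv 0$ on $\Delta^{\ast}$ gives $N\nu_{-}(B_0, \mu_0)=0$, as desired. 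Making this specialization step fully rigorous -- constructing $\bar{\mathcal{J}}_{-}$ and extending the normal function across $0$ -- is where the main technical work lies.
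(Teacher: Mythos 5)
Your overall strategy is the paper's argument run in the contrapositive, and you have correctly located the crux: everything reduces to extending the anti-invariant Jacobian fibration and the normal function across $t=0$ compatibly with $\nu_{-}(B_0,\mu_0)$. But that step is exactly what you leave as an expectation (``one expects $\tilde{\nu}_{-}$ to extend\dots''), so the proposal has a genuine gap at its acknowledged main obstacle. The paper closes it differently from what you sketch: rather than building a Jacobian out of the primitive cohomology of the singular family $\bar{X}\to\Delta$ (which would require a regulator/normal-function formalism for families of \emph{singular} surfaces that is not set up anywhere in the paper), it first applies Tjurina's theorem \cite{Tj} to obtain, after a cyclic base change $t\mapsto t^{N}$, a simultaneous minimal resolution $f\colon X\to\Delta$, i.e.\ a \emph{smooth} family of $K3$ surfaces over the whole disc (the involution need not extend over the central fiber, but that is harmless). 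Then $R^2f_{\ast}\Z$ is a trivial local system, the sub-local system $\cup_{t\ne 0}H_{-}(X_t,\iota_t)$ extends trivially to $\mathcal{H}_{-}$ over $\Delta$, the cycle family is defined over all of $\Delta$, and its normal function is holomorphic across $0$ by the general result already quoted in \S\ref{ssec: normal function} --- no separate extension argument is needed. Your remark that the curves and functions extend over $\Delta$ is correct but does not by itself produce a smooth total family on which the regulator varies holomorphically; without the simultaneous resolution the family $X\to\Delta$ of minimal resolutions need not exist.

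The second missing ingredient is the compatibility at $t=0$. You assert a surjection $\bar{\mathcal{J}}_{-,0}\twoheadrightarrow J(X_0,\iota_0)$; in the paper this rests on Claim \ref{claim: L-}, the lattice inclusion $H_{-}(X_0,\iota_0)\subset(\mathcal{H}_{-})_{0}$, and it genuinely requires proof: it is established by comparing both sides with the primitive part of $H^2(\bar{X}'_t,\Q)=IH^2(\bar{X}'_t,\Q)$ via Lemma \ref{lem: H- vs IH} and the specialization maps for intersection cohomology. Granting that inclusion, $J(X_0)\to J(X_0,\iota_0)$ factors through $(\mathcal{J}_{-})_{0}$, so the value of the section at $0$ maps to $\nu_{-}(\xi_0)=\nu_{-}(B_0,\mu_0)$, and the countability argument (which you and the paper share, yours phrased as rigidity by contradiction, the paper's run forward from $t=0$) finishes the proof. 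So: right skeleton, right auxiliary lemma identified, but the two load-bearing steps --- simultaneous resolution to get a smooth total family, and the $IH^2$ specialization argument for the inclusion of lattices --- are absent.
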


\begin{proof}
By a theorem of Tjurina \cite{Tj}, after taking a cyclic base change of $\bar{X}\to \Delta$, 
we can take its simultaneous minimal resolution. 
Thus we have a commutative diagram 
\begin{equation*}
\xymatrix{
X \ar[rd]_-{f} \ar[r]^-{p_2} & \bar{X}' \ar[d] \ar[r]^-{p_1} & \bar{X} \ar[r]^-{\bar{\pi}} \ar[d] & \Delta\times {\plane} \ar[ld] \\ 
  & \Delta \ar[r] & \Delta  &  
}
\end{equation*} 
where $\Delta \to \Delta$ is given by $t\mapsto t^{N}$, 
the middle square is cartesian, 
and $p_{2}$ is a simultaneous minimal resolution of the family of ADE singularities. 
The end product $f\colon X\to \Delta$ is a smooth family of $K3$ surfaces. 
(Note, however, that the involution on $X|_{\Delta^{\ast}}$ does not extend over $X$ in general.) 

Let $\pi= \bar{\pi}\circ p_1 \circ p_2$. 
Then $Z_1=\pi^{-1}(N_1)$ is the family of $(-2)$-curves over 
the family $(\bar{\pi}\circ p_{1})^{-1}(N_1)$ of $A_1$-points. 
We also let $Z_{0}\subset X$ be the strict transform of  $(\bar{\pi}\circ p_{1})^{-1}(L)$ in $X$. 
Then $Z_1$ is a family of $(-2)$-curves which intersects with $Z_{0}$ at two points in each fiber. 
Thus $Z_0\cap Z_1=S\!_0\sqcup S\!_{\infty}$ where 
each $S\!_{\ast}$ is a section of $X\to \Delta$. 
Here the indices $0$, $\infty$ are assigned according to our strong marking. 
As in \S \ref{sssec: cycle family}, we take meromorphic functions $f_0, f_1$ on $Z_0, Z_1$ such that 
${\rm div}(f_0)=S\!_0-S\!_{\infty}$ and ${\rm div}(f_1)=S\!_{\infty}-S\!_0$ respectively. 
Thus we obtain the analytic family 
\begin{equation*}\label{eqn: cycle family local analytic}
\xi = (Z_0, f_0) + (Z_1, f_1) 
\end{equation*}
of higher Chow cycles on the smooth $K3$ family $f\colon X\to \Delta$. 
By construction, the fiber $\xi_{t}$ over $t\in \Delta$ 
is a higher Chow cycle attached to the strongly-marked sextic $(B_{t^{N}}, \mu_{t^{N}})$. 
It is sufficient to show that $\nu_{-}(\xi_{t})$ is non-torsion for very general $t$. 

Next we extend the family of Jacobians over $\Delta^{\ast}$ 
to a smooth family of generalized complex tori over $\Delta$. 
Since $B|_{\Delta^{\ast}}$ is equisingular, 
the deformation type of $(X_{t}, \iota_{t})$ is stable for $t\ne 0$. 
This implies that the primitive sublattices $H_{-}(X_t, \iota_t)$ of $H^2(X_t, {\Z})$ for $t\ne 0$ 
form a sub local system of $R^2f_{\ast}{\Z}|_{\Delta^{\ast}}$. 
Since $R^2f_{\ast}{\Z}$ is a trivial local system over $\Delta$, 
we find that 
$\cup_{t\ne 0}H_{-}(X_t, \iota_t)$ 
extends to a sub local system of $R^2f_{\ast}{\Z}$ over $\Delta$. 
We denote it by $\mathcal{H}_{-}\subset R^2f_{\ast}{\Z}$. 
Let $(\mathcal{H}_{-})_{0}$ be the stalk of $\mathcal{H}_{-}$ at $t=0$. 

\begin{claim}\label{claim: L-}
We have $H_{-}(X_0, \iota_0)\subset (\mathcal{H}_{-})_{0}$ in $H^2(X_0, {\Z})$. 
\end{claim}

\begin{proof}
Since both $H_{-}(X_0, \iota_0)$ and $(\mathcal{H}_{-})_{0}$ are primitive sublattices of $H^2(X_0, {\Z})$, 
it suffices to verify this inclusion after taking the tensor product with ${\Q}$. 
Let $t\ne 0$. 
By Lemma \ref{lem: H- vs IH}, $H_-(X_{t}, \iota_{t})_{{\Q}}$ is naturally isomorphic to 
the primitive part of $IH^2(\bar{X}'_{t}, {\Q})=H^2(\bar{X}'_{t}, {\Q})$. 
Therefore the natural isomorphism 
$H^2(X_0, {\Q})\to H^2(X_t, {\Q})$ 
induces the commutative diagram 
\begin{equation*}
\xymatrix{
H_{-}(X_{0}, \iota_{0})_{{\Q}}  \ar[r]^-{\simeq} \ar@{.>}[d]  & H^2(\bar{X}_{0}', {\Q})_{prim}  \ar@{^{(}-_>}[r]  \ar@{^{(}-_>}[d] 
& H^2(\bar{X}_{0}', {\Q})  \ar@{^{(}-_>}[d]  \ar@{^{(}-_>}[r]  & H^2(X_{0}, {\Q}) \ar[d]^-{\simeq} \\ 
H_{-}(X_{t}, \iota_{t})_{{\Q}}  \ar[r]^-{\simeq} & H^2(\bar{X}_{t}', {\Q})_{prim}  \ar@{^{(}-_>}[r] 
& H^2(\bar{X}_{t}', {\Q})  \ar@{^{(}-_>}[r] & H^2(X_{t}, {\Q}) 
}
\end{equation*}
where the two middle vertical maps are the specialization maps 
for the intersection cohomology. 
Since $(\mathcal{H}_{-})_{0}\otimes {\Q}$ is the image of $H_{-}(X_t, \iota_{t})_{{\Q}}$ in $H^2(X_0, {\Q})$, 
we obtain $H_{-}(X_0, \iota_0)_{{\Q}}\subset (\mathcal{H}_{-})_{0}\otimes {\Q}$. 
\end{proof}

We go back to the proof of Lemma \ref{lem: degeneration}. 
Let 
\begin{equation*}
\mathcal{J} = \mathcal{J}(R^2f_{\ast}{\Z}), \quad 
\mathcal{J}_{-} = \mathcal{J}(\mathcal{H}_{-}^{\vee}), 
\end{equation*}
be the Jacobian fibrations over $\Delta$ attached to 
the variations of Hodge structures $R^2f_{\ast}{\Z}$ and $\mathcal{H}_{-}^{\vee}$ respectively 
(cf.~\S \ref{ssec: normal function} and \S \ref{ssec: Jacobian}). 
The orthogonal projection 
$R^2f_{\ast}{\Z} = (R^2f_{\ast}{\Z})^{\vee} \twoheadrightarrow \mathcal{H}_{-}^{\vee}$ 
defines a surjective homomorphism 
$\mathcal{J}\twoheadrightarrow \mathcal{J}_{-}$ 
over $\Delta$. 
When $t\ne0$, we have 
$(\mathcal{J}_{-})_{t}=J(X_t, \iota_{t})$. 
When $t=0$, the projection $J(X_0)\to J(X_0, \iota_0)$ factors through $(\mathcal{J}_{-})_{0}$ by Claim \ref{claim: L-}: 
\begin{equation}\label{eqn: J-0}
J(X_0) = \mathcal{J}_{0} \twoheadrightarrow (\mathcal{J}_{-})_{0} \twoheadrightarrow J(X_0, \iota_0). 
\end{equation} 

The normal function $\nu(\xi)$ of our cycle family $\xi$ is 
a holomorphic section of $\mathcal{J}\to \Delta$ over $\Delta$. 
Let 
\begin{equation*}
\nu_{-}(\xi) :  \Delta\to \mathcal{J}_{-} 
\end{equation*} 
be the composition of $\nu(\xi)$ with the projection $\mathcal{J}\to \mathcal{J}_{-}$. 
This is a holomorphic section of $\mathcal{J}_{-} \to \Delta$ over $\Delta$. 
Then we have 
\begin{equation}\label{eqn: nu-}
\nu_{-}(\xi)(t) = \nu_{-}(\xi_{t})  \quad \in \;  (\mathcal{J}_{-})_{t}=J(X_t, \iota_{t}) 
\end{equation}
for $t\ne 0$ by the definition \eqref{eqn: anti-invariant regulator} of the anti-invariant regulator.  
On the other hand, when $t=0$, the image of $\nu_{-}(\xi)(0)$ by the projection 
$(\mathcal{J}_{-})_{0}\to J(X_0, \iota_0)$ 
is equal to $\nu_{-}(\xi_0)$. 
Indeed, by \eqref{eqn: J-0}, 
the image of $\nu_{-}(\xi)(0)$ in $J(X_0, \iota_0)$ is the image of 
$\nu(\xi)(0)=\nu(\xi_{0})$ in $J(X_0, \iota_0)$, 
and hence is $\nu_{-}(\xi_{0})$. 

We can now complete the proof of Lemma \ref{lem: degeneration}. 
Our assumption is that $\nu_{-}(\xi_0)\in J(X_0, \iota_0)$ is non-torsion. 
By the above consideration, 
this implies that $\nu_{-}(\xi)(0)\in (\mathcal{J}_{-})_0$ is non-torsion. 
Since $\nu_{-}(\xi)$ is a holomorphic section of the smooth fibration $\mathcal{J}_{-}\to \Delta$ 
and since the torsion points of a generalized complex torus are countable, 
we see that the locus of points $t\in \Delta$ where $\nu_{-}(\xi)(t)$ is torsion is either 
the whole $\Delta$ or a union of countably many proper analytic subsets, namely countably many points. 
Since $\nu_{-}(\xi)(0)$ is non-torsion, the former case does not occur. 
By \eqref{eqn: nu-}, we conclude that $\nu_{-}(\xi_t)$ is non-torsion for very general $t$.  
\end{proof}

\begin{remark}
When $X_0$ has minimal Picard rank, i.e., ${\rm NS}(X_0)=H_{+}(X_0, \iota_{0})$, 
Claim \ref{claim: L-} can be seen more easily. 
Indeed, writing $H_+=H_+(X_{t_0}, \iota_{t_{0}})$ for some reference $t_{0}\ne 0$, 
the family $X|_{\Delta^{\ast}}$ is $H_+$-polarized. 
Letting $t\to 0$, we still have $(H_+, H^{2,0}(X_0))\equiv 0$ at $t=0$. 
Hence $H_+\subset {\rm NS}(X_0)= H_{+}(X_0, \iota_{0})$. 
Taking the orthogonal complement, we see that 
$H_{-}(X_0, \iota_{0})\subset H_{-}$. 
In fact, for running our induction, Lemma \ref{lem: degeneration} for $1$-parameter degenerations 
with such a central fiber $X_0$ is sufficient. 
\end{remark}

\subsection{Nontriviality}\label{ssec: nontriviality}

Let $(X, \iota, \xi)\to \hat{U}$ 
be the family of 2-elementary $K3$ surfaces and higher Chow cycles on them constructed 
in \S \ref{ssec: recipe uniform}. 
We can now prove our main result. 

\begin{theorem}[Theorem \ref{thm: intro}]\label{thm: nontrivial}
The indecomposable part $(\xi_{u})_{ind}$ of $\xi_{u}$ is non-torsion for very general $u\in \hat{U}$. 
\end{theorem}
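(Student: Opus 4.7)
The plan is to prove the stronger statement that the anti-invariant regulator $\nu_{-}(\xi_u)\in J(X_u,\iota_u)$ is non-torsion for very general $u\in \hat{U}_r$, which implies the theorem as follows. For very general $u$ the equality $H_{+}(X_u,\iota_u)={\rm NS}(X_u)$ holds (\S \ref{ssec: K3 classify}), so $J(X_u,\iota_u)=J_{tr}(X_u)$ by \S \ref{ssec: Jacobian}, and under this identification $\nu_{-}(\xi_u)$ coincides with $\nu_{tr}((\xi_u)_{ind})$; since $\nu_{tr}$ factors through ${\rm CH}^{2}(X_u,1)_{ind}$ by \S \ref{ssec: indecomposable}, non-torsion of $\nu_{-}(\xi_u)$ forces non-torsion of $(\xi_u)_{ind}$.

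I would prove the stronger statement by descending induction on $r$ from $r=18$. The base case is treated in \cite{Sa}, where essentially the same cycle on a different model of $\hat{U}_{18}$ is shown to have non-torsion (transcendental) regulator at very general points. Assume now that the statement holds on $\hat{U}_{r+1}$, and recall the inductive structure \eqref{eqn: extend Utilde}: an etale map $\hat{U}_{r+1}\to \hat{U}_r'-\hat{U}_r$ into the boundary of the partial compactification of $\hat{U}_r$ that is compatible with the families of strongly-marked sextics and hence with the 2-elementary $K3$ families and with the cycle families. Choose $v\in \hat{U}_{r+1}$ at which $\nu_{-}(\xi_v)$ is non-torsion and let $u_0\in \hat{U}_r'-\hat{U}_r$ be its image. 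By this compatibility, the cycle attached to $u_0$ has non-torsion anti-invariant regulator in $J(X_{u_0},\iota_{u_0})$. Now pick a holomorphic disc $\varphi:\Delta\to \hat{U}_r'$ with $\varphi(0)=u_0$ and $\varphi(\Delta^{*})\subset \hat{U}_r$; after shrinking $\Delta$, the pullback family of strongly-marked plane sextics satisfies the hypotheses of Lemma \ref{lem: degeneration} (the genericity Condition \ref{condition: genericity} is open, and $N_1,N_2,L$ extend across the boundary by \S \ref{sssec: Uhat}). Lemma \ref{lem: degeneration} then gives that $\nu_{-}(\xi_{\varphi(t)})$ is non-torsion for very general $t\in \Delta^{*}$.

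It remains to propagate non-torsion from $\Delta^{*}$ to a very general subset of $\hat{U}_r$. The anti-invariant normal function $\nu_{-}(\xi)$ is a holomorphic section of the smooth generalized complex torus fibration $\mathcal{J}_{-}\to \hat{U}_r$ (well-defined since $(X,\iota)\to \hat{U}_r$ is equisingular and hence of constant deformation type). For each positive integer $n$ the $n$-torsion locus
\begin{equation*}
T_n = \{\, u\in \hat{U}_r \,:\, n\cdot \nu_{-}(\xi_u)=0 \,\}
\end{equation*}
is a closed analytic subset, being the preimage of the zero section under the holomorphic map $n\cdot \nu_{-}(\xi)$. Since $\hat{U}_r$ is irreducible (Remark \ref{rmk: connected}), each $T_n$ is either proper or all of $\hat{U}_r$; if $T_n=\hat{U}_r$ for some $n$, then $n\cdot \nu_{-}(\xi_{\varphi(t)})=0$ for every $t\in \Delta^{*}$, contradicting the previous paragraph. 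Hence each $T_n$ is proper, and the torsion locus $\bigcup_n T_n$ is a countable union of proper analytic subsets, whose complement is very general.

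The main obstacle I expect is the inductive interface: one must ensure that the cycle family on $\hat{U}_r$, when restricted to a disc through a boundary point $u_0\in \hat{U}_r'-\hat{U}_r$, specializes at the central fiber to the cycle already constructed over $\hat{U}_{r+1}$ (up to the etale map). This compatibility is not automatic and is exactly what must be engineered into the partial compactifications $\tilde{U}_r'$ and the strong markings in \S \ref{sec: case-by-case}; once this matching of weak and strong markings across the boundary is verified, the uniform degeneration argument of Lemma \ref{lem: degeneration} together with the countable-union argument above completes the induction.
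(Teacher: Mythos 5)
Your proposal follows the paper's own argument essentially step for step: the reduction of the theorem to non-torsionness of the anti-invariant normal function via $J(X_u,\iota_u)=J_{tr}(X_u)$ at very general $u$, the descending induction on $r$ with base case $r=18$ from \cite{Sa}, the use of a disc through a boundary point of $\hat{U}_r'$ together with Lemma \ref{lem: degeneration}, and the final propagation by the countable-union-of-proper-analytic-subsets argument for the holomorphic section $\nu_{-}(\xi)$. The compatibility of (weak and strong) markings across the boundary that you flag as the main obstacle is indeed exactly what the paper arranges in \S \ref{sec: case-by-case} and \S \ref{sssec: Uhat}, so your proof is correct and coincides with the paper's.
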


We first reduce Theorem \ref{thm: nontrivial} to the non-torsionness of the anti-invariant normal function. 
Let $\mathcal{J}_{-}\to \hat{U}$ be the fibration of Jacobians of $(X, \iota)\to \hat{U}$, 
and let 
\begin{equation*}
\nu_{-}(\xi) : \hat{U}\to \mathcal{J}_{-}, \quad 
\nu_{-}(\xi)(u) = \nu_{-}(\xi_{u}),  
\end{equation*}
be the anti-invariant normal function of $\xi$. 

\begin{theorem}\label{thm: nontrivial normal function}
$\nu_{-}(\xi)(u)\in (\mathcal{J}_{-})_{u}$ is non-torsion for very general $u\in \hat{U}$. 
\end{theorem}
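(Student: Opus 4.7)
The plan is to prove Theorem \ref{thm: nontrivial normal function} by downward induction on $r$, using the partial compactification structure \eqref{eqn: extend Utilde} together with the degeneration Lemma \ref{lem: degeneration}. The base case $r = 18$ follows from \cite{Sa} (see \S \ref{ssec: r=18}). So the real work is the inductive step: assuming that $\nu_{-}(\xi_{u})$ is non-torsion for very general $u \in \hat{U}_{r+1}$, deduce the same statement for very general $u \in \hat{U}_{r}$.

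To set this up, I would fix a very general $\tilde{u}_{0} \in \hat{U}_{r+1}$ and let $u_{0} \in \hat{U}_{r}' \setminus \hat{U}_{r}$ be its image under the \'etale map of \eqref{eqn: extend Utilde}. By the compatibility of the families of strongly-marked sextics spelled out at the end of \S \ref{sssec: Uhat}, the marked sextic at $u_{0}$ agrees with that at $\tilde{u}_{0}$. Since, by Lemma \ref{lem: anti-inv regulator}, the anti-invariant regulator depends only on the marked sextic, we have $\nu_{-}(B_{u_{0}}, \mu_{u_{0}}) = \nu_{-}(B_{\tilde{u}_{0}}, \mu_{\tilde{u}_{0}})$, which is non-torsion by the inductive hypothesis.

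Next I would choose a holomorphic disc $\Delta \hookrightarrow \hat{U}_{r}'$ through $u_{0}$ that is transverse to the boundary divisor $\hat{U}_{r}' \setminus \hat{U}_{r}$, so that $\Delta^{\ast} \subset \hat{U}_{r}$ and the fiber over $0 \in \Delta$ is $u_{0}$. Pulling back the universal strongly-marked sextic over $\hat{U}_{r}'$ to $\Delta$ produces a family $B \to \Delta$ to which Lemma \ref{lem: degeneration} is intended to apply, since by construction the two marked node sections $N_{1}, N_{2}$ and the line bundle $L$ extend across $t = 0$ with the required disjointness from any newly appearing singularity. Applying Lemma \ref{lem: degeneration} to this family yields that $\nu_{-}(\xi_{t})$ is non-torsion for very general $t \in \Delta^{\ast}$. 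In particular, the anti-invariant normal function $\nu_{-}(\xi)$ is not identically torsion on $\hat{U}_{r}$. Since $\nu_{-}(\xi)$ is a holomorphic section of the smooth family of generalized complex tori $\mathcal{J}_{-} \to \hat{U}_{r}$ and $\hat{U}_{r}$ is connected (Remark \ref{rmk: connected}), the torsion locus is either all of $\hat{U}_{r}$ or a countable union of proper analytic subsets; the former is excluded, so the non-torsion locus is very general, completing the induction.

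The main obstacle, and the conceptually delicate point, is the verification at the boundary: one must check that the transverse disc $\Delta$ can actually be chosen so that the pullback family of sextics fulfills every hypothesis of Lemma \ref{lem: degeneration}, namely that the two node sections and the line $L$ remain disjoint from whatever additional singularity appears at $t = 0$, and that the genericity Condition \ref{condition: genericity} persists throughout $\Delta$. This is essentially a structural property of the case-by-case constructions in \S \ref{sec: case-by-case}, where the boundary stratum $\hat{U}_{r}' \setminus \hat{U}_{r}$ is designed so that the degeneration affects only the \emph{unmarked} singularities of the sextic, while $N_{1}, N_{2}, L$ are preserved intact. Once this compatibility is confirmed, the rest of the argument is formal and uniform in $r$, and the inductive passage from $r+1$ to $r$ goes through without further case analysis.
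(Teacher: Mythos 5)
Your proposal is correct and follows essentially the same route as the paper: downward induction on $r$ with base case $r=18$ from \cite{Sa}, a disc (the paper's ``small arc'') through a boundary point lying over a very general point of $\hat{U}_{r+1}$, identification of the anti-invariant regulators via Lemma \ref{lem: anti-inv regulator}, the degeneration Lemma \ref{lem: degeneration}, and the final spreading-out via holomorphy of $\nu_{-}(\xi)$ and countability of torsion. The only cosmetic difference is your appeal to connectedness of $\hat{U}_r$ in the last step, which the paper notes is available (Remark \ref{rmk: connected}) but not logically required.
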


Theorem \ref{thm: nontrivial} can be derived from Theorem \ref{thm: nontrivial normal function} as follows. 

\begin{proof}[(Proof of Theorem \ref{thm: nontrivial})]
Let $\hat{U}^{\circ}\subset \hat{U}$ be the locus where $\nu_{-}(\xi)(u)$ is non-torsion. 
Theorem \ref{thm: nontrivial normal function} assures that $\hat{U}^{\circ}$ is non-empty; 
it contains the complement of countably many proper analytic subsets. 
Let $\hat{U}^{+}\subset \hat{U}$ 
be the locus where ${\rm NS}(X_{u})=H_{+}(X_u, \iota_u)$. 
Since $\hat{U}$ dominates the moduli space of 2-elementary $K3$ surfaces of this type, 
$\hat{U}^{+}$ is the complement of countably many divisors (see \S \ref{ssec: K3 classify}). 
If $u\in \hat{U}^{+}$, then 
$J(X_u, \iota_u)=J_{tr}(X_u)$ as explained in \S \ref{ssec: Jacobian}, 
and hence $\nu_{-}(\xi)(u)=\nu_{tr}((\xi_{u})_{ind})$ for such $u$. 
Therefore, if $u\in \hat{U}^{\circ}\cap \hat{U}^{+}$, then $\nu_{tr}((\xi_{u})_{ind})$ is non-torsion. 
It follows that $(\xi_u)_{ind}$ is non-torsion for such $u$. 
\end{proof}


Now we prove Theorem \ref{thm: nontrivial normal function}. 

\begin{proof}[(Proof of Theorem \ref{thm: nontrivial normal function})]
We proceed by induction on the dimension $20-r$ of the moduli space. 
We write $\hat{U}=\hat{U}_{r}$ for indicating the invariant $r$. 

The assertion in the starting case $r=18$ was essentially proved in \cite{Sa}, 
where the family was constructed from certain bidegree $(4, 4)$ curves on ${\proj}^1\times{\proj}^1$. 
Translation of the result of \cite{Sa} to the present situation is explained in \S \ref{sssec: translation 18}. 

Suppose that we have proved the assertion in the step $r+1$, i.e., for $\hat{U}_{r+1}$. 
Let $\hat{U}_{r}\subset \hat{U}_{r}'$ be the extended parameter space constructed 
in the second half of \S \ref{sssec: Uhat}. 
We take a small arc $\lambda \colon \Delta\to \hat{U}_{r}'$ 
such that $\lambda(\Delta^{\ast})\subset \hat{U}_{r}$ and 
$\lambda(0)\in \hat{U}_{r}'-\hat{U}_{r}$ is the image of a very general point of $\hat{U}_{r+1}$ 
(as prescribed in the step $r+1$) 
by the etale map $\hat{U}_{r+1} \to \hat{U}_{r}'-\hat{U}_{r}$. 
Since the anti-invariant regulator depends only on the strongly-marked sextic (Lemma \ref{lem: anti-inv regulator}), 
the assumption of induction implies that  
the hypothesis of Lemma \ref{lem: degeneration} is fulfilled 
for the induced family of strongly-marked sextics over $\Delta$. 
Therefore we can apply Lemma \ref{lem: degeneration} to see that 
$\nu_{-}(\xi)(\lambda(t))$ is non-torsion for very general $t\in \Delta^{\ast}$. 

Now, since $\nu_{-}(\xi)$ is a holomorphic section of the Jacobian fibration $\mathcal{J}_{-}$, 
the locus of points $u\in \hat{U}_{r}$ where $\nu_{-}(\xi)(u)$ is torsion is either 
the whole $\hat{U}_{r}$ or a union of countably many proper analytic subsets. 
By the above consideration, the first case does not occur. 
This implies our assertion in the step $r$. 
\end{proof}

\begin{remark}\label{rmk: rank 2}
If we use the $(-2)$-curve $Z_2$ over $q_2$ in place of $Z_1$, 
we obtain a higher Chow cycle with support $Z_0+Z_2$ (see Figure 1). 
Thus, after a further base change, 
we obtain a second family of higher Chow cycles, say $\xi'$. 
Then the proof of Theorem \ref{thm: nontrivial normal function} can be adapted to 
show that $\nu_{-}(\xi)(u)$ and $\nu_{-}(\xi')(u)$ are linearly independent 
over ${\Q}$ for very general $u$: 
the starting case $r=18$ was essentially done in \cite{Sa},  
and the degeneration lemma can be easily generalized to the linear independence version. 
Therefore the indecomposable parts of $\xi_{u}$ and $\xi_{u}'$ 
are linearly independent over ${\Q}$ for very general $u$. 
In particular, ${\rm CH}^{2}(X_u, 1)_{ind}$ has rank $\geq 2$. 
This gives a strengthening of Theorem \ref{thm: nontrivial}. 
Since we want to keep our presentation simple, 
we leave the detail to the readers. 
\end{remark}

\section{Case-by-case constructions}\label{sec: case-by-case}

In this section, for each $3\leq r \leq 18$, 
we construct parameter spaces $\tilde{U}_{r}\to U_{r}$ 
and their partial compactifications $\tilde{U}_{r}' \to U_{r}'$ (when $r<18$) 
as announced in \S \ref{ssec: recipe case-by-case}. 
These spaces are to be substituted in \S \ref{sec: recipe} 
as the sources for producing families of higher Chow cycles. 

The space $U_r$ will be a locus in ${\sextic}$ parametrizing 
an equisingular family of plane sextics, 
and its covering $\tilde{U}_{r}$ endows the sextics with a certain type of markings. 
The type of marking depends on $r$, but it will be one of the following or sometimes their mixture: 
\begin{itemize}
\item labeling of some nodes of the sextic 
\item labeling of the irreducible components of the sextic 
\end{itemize}
In any case, a weak marking in the sense of Definition \ref{def: weak marking} 
will be induced in a specific way. 
In practice, we will often define $\tilde{U}_{r}$ first, 
and then define $U_{r}$ as the image of the natural map $\tilde{U}_{r}\to {\sextic}$.  

The partial compactification $U_{r}'$ of $U_{r}$ will be taken inside the closure of $U_{r}$ in ${\sextic}$, 
and its boundary $U_{r}'-U_r$ will coincide with $U_{r+1}$. 
The boundary of the partial compactification $\tilde{U}_{r}'$ of the covering $\tilde{U}_{r}$ 
parametrizes degenerated sextics endowed with limit of markings on $\tilde{U}_{r}$. 
We will have an etale map $\tilde{U}_{r+1} \to \tilde{U}_{r}'-\tilde{U}_{r}$ 
which converts the markings on $\tilde{U}_{r+1}$ to the limit markings on $\tilde{U}_{r}'-\tilde{U}_{r}$. 
The induced weak markings will agree. 
This shows that the relevant spaces fit into the commutative diagram \eqref{eqn: CD Ur inductive}. 
In fact, for running our induction in \S \ref{ssec: nontriviality}, 
it is sufficient to construct partial compactifications only locally. 
Here we give a global construction in order to have a better understanding.

\subsection{The case $r=18$}\label{ssec: r=18}

In this subsection, 
we define the parameter spaces in the case $r=18$ 
and explain that the assertion of Theorem \ref{thm: nontrivial normal function} in this case 
follows from the result of \cite{Sa}. 

\subsubsection{The parameter spaces}\label{sssec: r=18}

We first define $\tilde{U}_{18}$ as the codimension $2$ locus in ${\lin}^{6}$ parametrizing 
six ordered distinct lines $(L_1, \cdots, L_6)$ such that 
$L_1, L_2, L_3$ intersect at one point, 
$L_4, L_5, L_6$ intersect at one point, 
and no other three of $L_1, \cdots, L_6$ intersect at one point. 
(See Figure 2.) 
Then we let $U_{18}\subset {\sextic}$ be the image of the natural map 
\begin{equation*}
\tilde{U}_{18} \to {\sextic}, \qquad (L_1, \cdots, L_6)\mapsto L_1+ \cdots +L_6. 
\end{equation*}
In \cite{Ma1} \S 9.5, it is proved that the 2-elementary $K3$ surfaces associated to 
the sextics $L_1+ \cdots + L_6$ in $U_{18}$ have invariant $(r, a, \delta)=(18, 4, 0)$, 
and $U_{18}$ dominates the moduli space $\mathcal{M}_{18,4,0}$. 

The projection $\tilde{U}_{18}\to U_{18}$ forgets labeling of the lines $L_1, \cdots, L_6$. 
This is a Galois cover with Galois group $\frak{S}_2\ltimes (\frak{S}_{3}\times \frak{S}_{3})$. 
For $(L_1, \cdots, L_6)\in \tilde{U}_{18}$ we define a weak marking of $L_1+ \cdots + L_6$ 
in the sense of Definition \ref{def: weak marking} by selecting the nodes as 
\begin{equation}\label{eqn: weak marking 18}
q_1 = L_1 \cap L_4, \quad q_2=L_2\cap L_5. 
\end{equation}
Imposing the genericity Condition \ref{condition: genericity}, 
we need to shrink $\tilde{U}_{18}$ (and $U_{18}$) to a Zariski open set. 
As explained at the end of \S \ref{sssec: U}, 
we will not change the notations even after removing some appropriate locus like this. 
This process will take place in every subsequent subsection, 
and we will not repeat this announcement explicitly. 

\begin{figure}[h]\label{figure: r18}
\includegraphics[height=45mm, width=65mm]{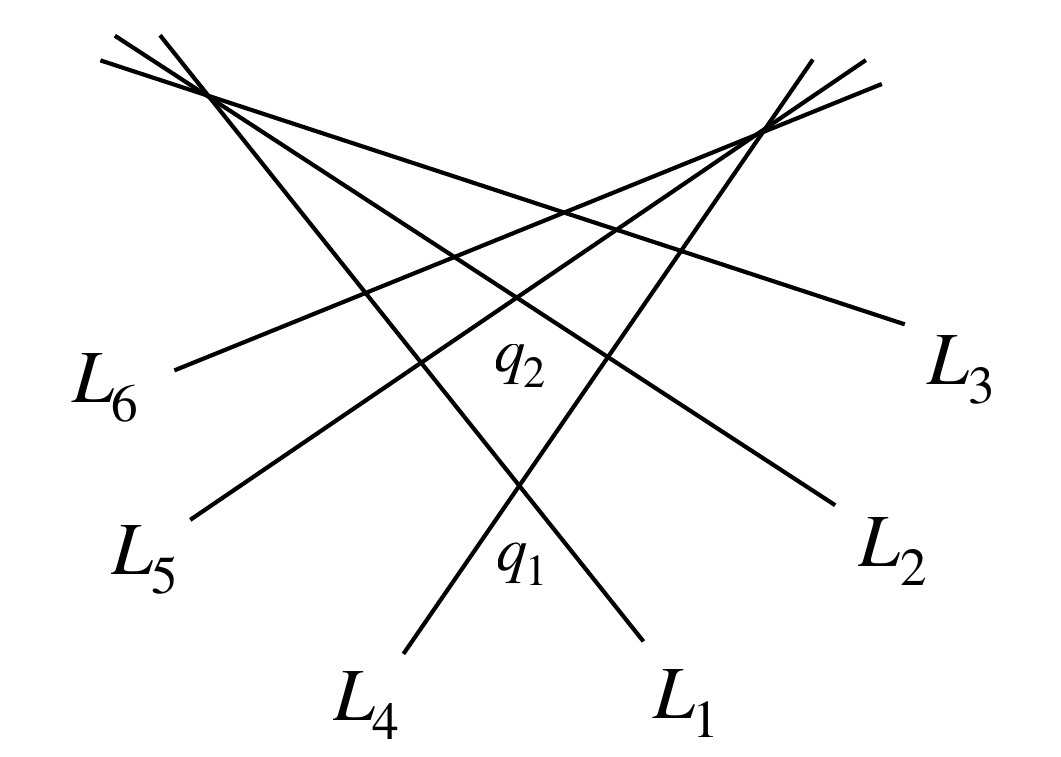}
\caption{$r=18$}
\end{figure}

\subsubsection{Translation to ${\proj}^1\times {\proj}^1$}\label{sssec: translation 18}

Let $\hat{U}_{18}\to \tilde{U}_{18}$ be the double cover 
constructed by the recipe of \S \ref{sssec: Uhat}, and 
$\xi$ be the family of higher Chow cycles over $\hat{U}_{18}$ constructed by the recipe of \S \ref{sssec: cycle family}. 
We shall explain that the result of \cite{Sa} implies 
the assertion of Theorem \ref{thm: nontrivial normal function} in this case. 

We take the geometric quotient of $\tilde{U}_{18}$ by ${\rm PGL}_3$. 
Its effect is to normalize four lines, say $L_1, L_2, L_4, L_5$, 
while $L_3$ and $L_6$ vary. 
Let $p_1=L_1\cap L_2$ and $p_2=L_4\cap L_5$. 
By blowing up $p_1$, $p_2$ and blowing down the strict transform of the line $\overline{p_{1}p_{2}}$, 
we pass from ${\plane}$ to ${\proj}^1\times {\proj}^1$. 
The lines $L_1, \cdots, L_6$ and the $(-1)$-curves $E_{1}$, $E_{2}$ over $p_{1}$, $p_{2}$ 
are transformed to four bidegree $(1, 0)$ curves and four bidegree $(0, 1)$ curves on ${\proj}^1\times {\proj}^1$. 
More precisely, the correspondence can be written as 
\begin{equation*}
L_1\to (1)\times {\proj}^1, \quad L_2\to (0)\times {\proj}^1, \quad 
L_3\to (\lambda_1 )\times {\proj}^1, \quad E_{2}\to (\infty )\times {\proj}^1, 
\end{equation*}
\begin{equation*}
L_4\to {\proj}^1\times (1), \quad L_5\to {\proj}^1\times (0), \quad 
L_6\to {\proj}^1\times (\lambda_2 ), \quad E_{1}\to {\proj}^1\times (\infty ), 
\end{equation*}
where $\lambda_1, \lambda_2$ vary in $\mathbb{A}^{1}\backslash \{ 0, 1 \}$. 
This correspondence defines an open embedding 
\begin{equation*}
\tilde{U}_{18}/{\rm PGL}_3 \hookrightarrow \mathbb{A}^{1}_{\lambda_1}\times \mathbb{A}^{1}_{\lambda_2}. 
\end{equation*}
The seventh line $L=\overline{q_1 q_2}$ on ${\plane}$ is transformed to 
the (unique) bidegree $(1, 1)$ curve $H_{0}$ on ${\proj}^1\times {\proj}^1$ 
joining the three points $p_{0,0}$, $p_{1,1}$, $p_{\infty,\infty}$, 
where $p_{x,y}$ means the point on ${\proj}^1 \times {\proj}^1$ with inhomogeneous coordinates $(x, y)$. 
Thus, by this correspondence, the sextics parametrized by $\tilde{U}_{18}$ are transformed to 
the above bidegree $(4, 4)$ curves on ${\proj}^1\times {\proj}^1$, 
and the weak marking $(L, q_1, q_2)$ is transformed to $(H_0, p_{1,1}, p_{0,0})$. 
This is the situation considered in \cite{Sa}. 


In \cite{Sa}, after base change by an etale map 
$T\to \mathbb{A}^{1}_{\lambda_1}\times \mathbb{A}^{1}_{\lambda_2}$, 
several families of higher Chow cycles on the $K3$ family over $T$ were constructed. 
The family $\xi$ considered here is essentially one of them: 
more precisely, $\xi_{1}$ in the notation of \cite{Sa} Section 4. 
In \cite{Sa} Theorem 7.1, it is proved that 
the image of $\nu_{tr}((\xi_{1})_{t})$ by the projection $J_{tr}(X)\to H^{2,0}(X)^{\vee}/T(X)^{\vee}$ 
is non-torsion for very general $t\in T$. 
Hence $\nu_{tr}((\xi_{1})_{t})$ is non-torsion for such $t$. 
It follows that $\nu_{-}((\xi_{1})_{t})$ is non-torsion for very general $t\in T$. 
In view of Lemma \ref{lem: regulator conjugate}, 
we see that for a very general point of $\mathbb{A}^{1}_{\lambda_1}\times \mathbb{A}^{1}_{\lambda_2}$, 
the associated cycle has non-torsion anti-invariant regulator for either choice of strong marking. 
Going back to ${\plane}$, 
we see that for a very general point of $\tilde{U}_{18}/{\rm PGL}_{3}$, 
the anti-invariant regulator of our cycle is non-torsion for either choice of strong marking. 
Thus $\nu_{-}(\xi)(u)$ is non-torsion for very general $u\in \hat{U}_{18}$. 
This is the property required in \S \ref{ssec: nontriviality} for starting our induction.

\begin{remark}\label{rmk: irreducible 18}
The explicit description in \cite{Sa} Section 4.2 shows that 
the two conjugate strong markings for the above family of weakly-marked bidegree $(4, 4)$ curves 
cannot be globally distinguished over 
$\mathbb{A}^{1}_{\lambda_1}\times \mathbb{A}^{1}_{\lambda_2}$. 
This means that the two points of a fiber of $\hat{U}_{18}\to \tilde{U}_{18}$ 
can be transformed to each other by the monodromy. 
Therefore $\hat{U}_{18}$ is connected. 
\end{remark}

\begin{remark}
This switch to the ${\proj}^1\times {\proj}^1$ model lasts until $r=5$, 
i.e., as far as the sextic $B$ has two more singular points other than $q_1$, $q_2$. 
The ${\proj}^1\times {\proj}^1$ model itself lasts until $r=4$. 
In this paper we adopt the ${\proj}^2$ model mainly because 
we can directly use the series of families constructed in \cite{Ma1}. 
\end{remark}

\subsection{The case $r=17$}\label{ssec: r=17}

We define parameter spaces in the case $r=17$ 
by moving the three lines $L_1$, $L_2$, $L_3$ in the case $r=18$ to general position. 
We first define $\tilde{U}_{17}$ as the codimension $1$ locus in ${\lin}^6$ parametrizing 
six ordered distinct lines $(L_1, \cdots, L_6)$ such that 
$L_4, L_5, L_6$ intersect at one point, 
and no other three of $L_1, \cdots, L_6$ intersect at one point. 
(See Figure 3.) 
Then let $U_{17}\subset {\sextic}$ be the image of the natural map 
\begin{equation*}
\tilde{U}_{17}\to {\sextic}, \qquad (L_1, \cdots, L_6)\mapsto L_1+\cdots +L_6. 
\end{equation*} 
The projection $\tilde{U}_{17}\to U_{17}$ is an $\frak{S}_{3}\times \frak{S}_{3}$-cover 
which forgets labeling of the six lines. 
For $(L_1, \cdots, L_6)\in \tilde{U}_{17}$ we define a weak marking of $L_1+\cdots +L_6$ 
by selecting the nodes as 
\begin{equation}\label{eqn: weak marking 17}
q_1 = L_1\cap L_4, \qquad q_2 = L_2\cap L_5. 
\end{equation}
In \cite{Ma1} \S 9.4, it is proved that 
the 2-elementary $K3$ surfaces associated to the sextics in $U_{17}$ have invariant $(r, a, \delta)=(17, 5, 1)$, 
and $U_{17}$ dominates the moduli space $\mathcal{M}_{17,5,1}$. 

\begin{figure}[h]\label{figure: r17}
\includegraphics[height=45mm, width=65mm]{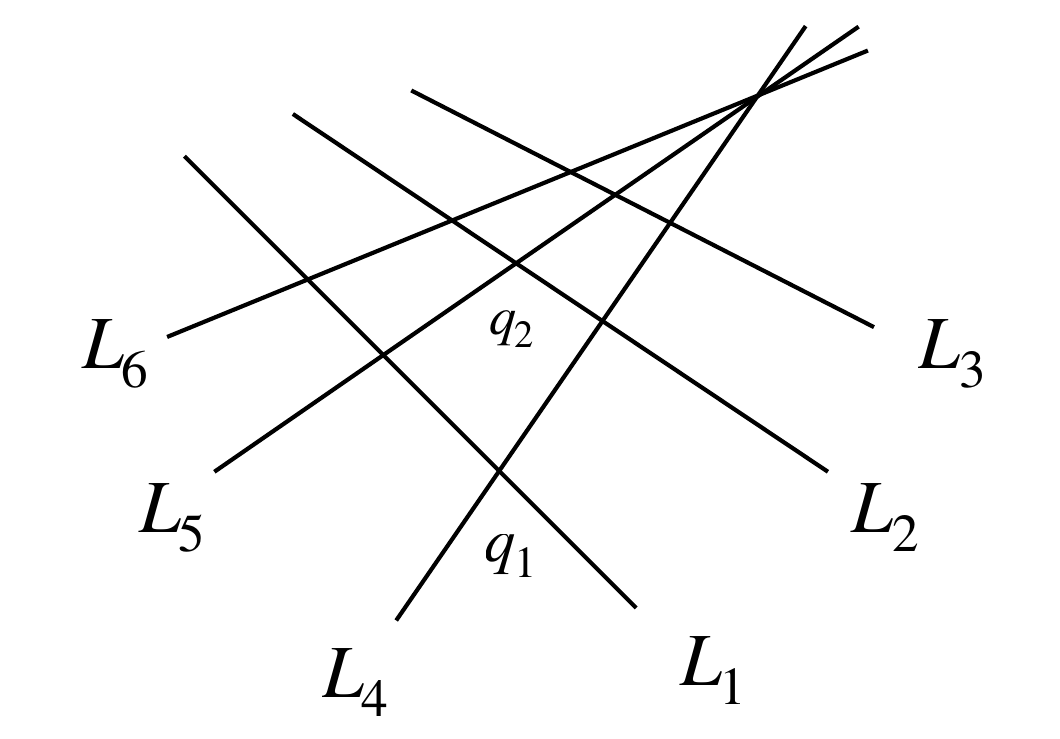}
\caption{$r=17$}
\end{figure}

Next we define partial compactifications by allowing $L_1, L_2, L_3$ to intersect at one point. 
Thus $\tilde{U}_{17}'$ is defined as the locus in ${\lin}^6$ parametrizing six distinct lines $(L_1, \cdots, L_6)$ such that 
$L_4, L_5, L_6$ intersect at one point, 
and no other three of $L_1, \cdots, L_6$ possibly except $L_1, L_2, L_3$ intersect at one point. 
This partial compactification $\tilde{U}_{17}'$ is still smooth. 
Clearly the boundary $\tilde{U}_{17}'-\tilde{U}_{17}$ coincides with $\tilde{U}_{18}$. 
The weak marking \eqref{eqn: weak marking 17} extends over $\tilde{U}_{17}'$, 
and at the boundary $\tilde{U}_{17}'-\tilde{U}_{17}$ this coincides with 
the one \eqref{eqn: weak marking 18} for $\tilde{U}_{18}$. 

Finally, we define $U_{17}'$ as the image of the natural map $\tilde{U}_{17}'\to {\sextic}$. 
Then $U_{17}'-U_{17}=U_{18}$. 
Note that $U_{17}'$ is non-normal at the boundary $U_{18}$. 
It has two branches corresponding to the choice of which triple intersection point to get resolved. 
If we take the normalization of $U_{17}'$, its boundary parametrizes 
\textit{ordered} pairs $(L_1+L_2+L_3, L_4+L_5+L_6)$ of three unordered lines meeting at one point. 
This is an etale double cover of $U_{18}$. 
Then the projection $\tilde{U}_{17}'\to U_{17}'$ factors through this normalization. 
This explains the difference of the degree of $\tilde{U}_{17}\to U_{17}$ and that of $\tilde{U}_{18}\to U_{18}$.

\subsection{The case $r=16$}\label{ssec: r=16}

We define parameter spaces in the case $r=16$ 
by moving the three lines $L_4$, $L_5$, $L_6$ in the case $r=17$ to general position. 
We first define $\tilde{U}_{16}$ as the open locus in ${\lin}^6$ parametrizing 
six ordered distinct lines $(L_1, \cdots, L_6)$ such that 
no three of them intersect at one point. 
(See Figure 4.) 
Then let $U_{16}\subset {\sextic}$ be the image of the natural map 
\begin{equation*}
\tilde{U}_{16}\to {\sextic}, \qquad (L_1, \cdots, L_6)\mapsto L_1+\cdots +L_6. 
\end{equation*}
The projection $\tilde{U}_{16}\to U_{16}$ an $\frak{S}_{6}$-cover which forgets labeling of the six lines. 
For $(L_1, \cdots, L_6)\in \tilde{U}_{16}$ we define a weak marking of $L_1+\cdots +L_6$ by 
\begin{equation}\label{eqn: weak marking 16}
q_1 = L_1\cap L_4, \qquad q_2 = L_2\cap L_5. 
\end{equation}
The 2-elementary $K3$ surfaces associated to the sextics in $U_{16}$ have invariant $(16, 6, 1)$. 
This family of $K3$ surfaces was studied extensively by Matsumoto-Sasaki-Yoshida \cite{MSY}. 
The map $U_{16}\to \mathcal{M}_{16,6,1}$ to the moduli space is dominant (\cite{MSY}, \cite{Ma1}). 

\begin{figure}[h]\label{figure: r16}
\includegraphics[height=45mm, width=65mm]{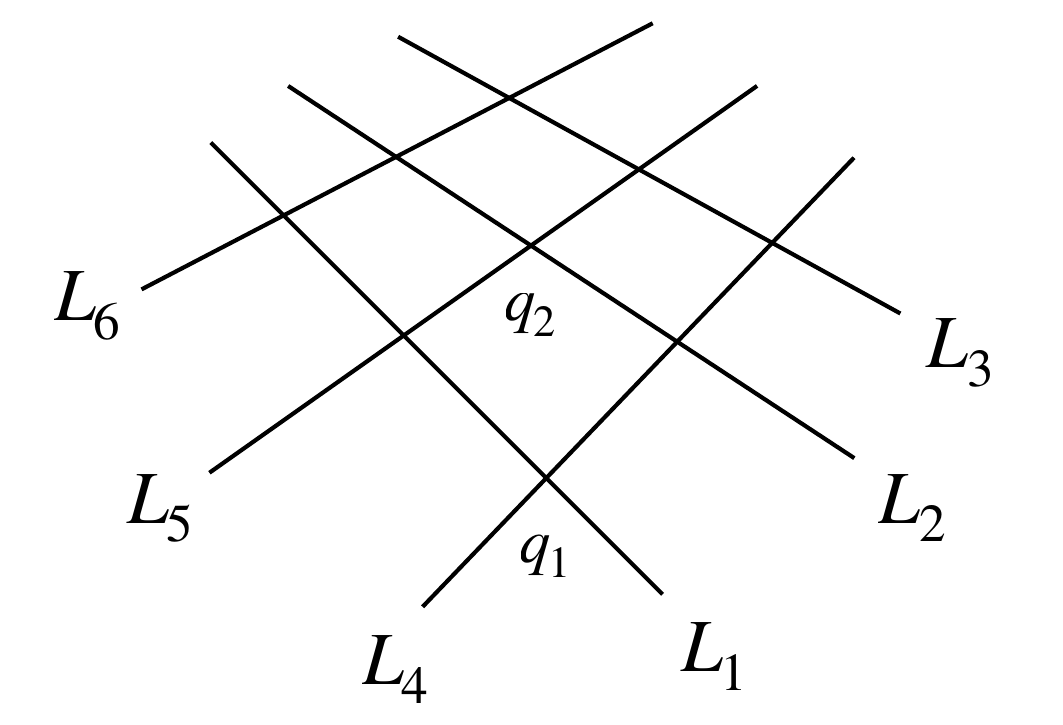}
\caption{$r=16$}
\end{figure}

We define partial compactifications by allowing $L_4, L_5, L_6$ to intersect at one point. 
Thus $\tilde{U}_{16}'$ is defined as the open locus in ${\lin}^6$ 
parametrizing six distinct lines $(L_1, \cdots, L_6)$ such that 
no three of them possibly except $L_4, L_5, L_6$ intersect at one point. 
The boundary $\tilde{U}_{16}'-\tilde{U}_{16}$ coincides with $\tilde{U}_{17}$.  
The weak marking \eqref{eqn: weak marking 16} at the boundary $\tilde{U}_{16}'-\tilde{U}_{16}$ 
coincides with the one \eqref{eqn: weak marking 17} for $\tilde{U}_{17}$. 
Finally, we let $U_{16}'$ be the image of the natural map $\tilde{U}_{16}'\to {\sextic}$. 
Then $U_{16}'-U_{16}=U_{17}$. 
The map $\tilde{U}_{16}'\to U_{16}'$ is non-proper over the boundary 
due to the lack of other components of the $\frak{S}_{6}$-orbit of $\tilde{U}_{17}$.

\begin{remark}
If we impose the condition that $L_1, \cdots, L_6$ are tangent to some smooth conic, 
the $K3$ surface $X$ is the Kummer surface of a principally polarized abelian surface. 
The corresponding locus is of codimension $1$ in $U_{15}$. 
Sreekantan \cite{Sr} constructed a series of infinitely many higher Chow cycles on such Kummer surfaces. 
It appears that our cycles restricted on the Kummer locus 
agree with the case $d=1$ of Sreekantan's cycles. 
\end{remark}

\begin{remark}
$K3$ surfaces in $\mathcal{M}_{16,6,1}$ have been studied from various viewpoints. 
For example, elliptic fibrations on very general members were classified in \cite{Kl}. 
The technique of \cite{Kl} was extended in \cite{CG} to other cases $(r, a, \delta)$ with $r+a=22$, 
namely the same specialization/generalization line as ours. 
\end{remark}

\subsection{The case $r=15$}\label{ssec: r=15}

We define parameter spaces in the case $r=15$ 
by smoothing $L_3+L_6$ in the case $r=16$ to smooth conics. 
First we define $\tilde{U}_{15}$ as the open locus in ${\conic}\times {\lin}^4$ parametrizing 
tuples $(Q, L_1, \cdots, L_4)$ such that 
$Q$ is a smooth conic and $Q+L_1+\cdots +L_4$ has at most nodes. 
(See Figure 5.) 
Then we let $U_{15}$ be the image of the natural map 
\begin{equation*}
\tilde{U}_{15}\to {\sextic}, \qquad (Q, L_1, \cdots, L_4)\mapsto Q+L_1+\cdots +L_4. 
\end{equation*}
The projection $\tilde{U}_{15}\to U_{15}$ is an $\frak{S}_{4}$-cover which forgets labeling of the four lines. 
For $(Q, L_1, \cdots, L_4)\in \tilde{U}_{15}$ we define a weak marking of $Q+L_1+\cdots +L_4$ by 
\begin{equation}\label{eqn: weak marking 15}
q_1 = L_1\cap L_3, \qquad q_2 = L_2\cap L_4. 
\end{equation}
The 2-elementary $K3$ surfaces associated to the sextics in $U_{15}$ have invariant $(15, 7, 1)$,  
and $U_{15}$ dominates the moduli space $\mathcal{M}_{15,7,1}$ (\cite{Ma1} \S 9.2).

\begin{figure}[h]\label{figure: r15}
\includegraphics[height=45mm, width=65mm]{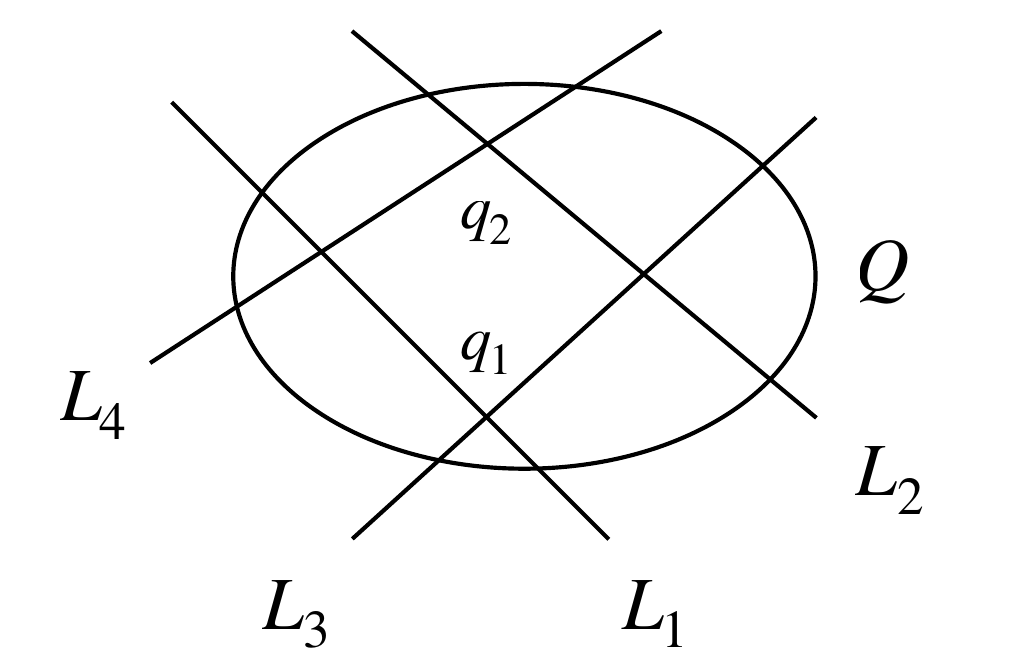}
\caption{$r=15$}
\end{figure}

Next we define partial compactifications by allowing the conic $Q$ to split. 
Thus let $\tilde{U}_{15}'\subset {\conic}\times {\lin}^4$ be the locus of tuples 
$(Q, L_1, \cdots, L_4)$ such that $Q+L_1+\cdots +L_4$ has at most nodes. 
We have the etale map of degree $2$ 
\begin{equation}\label{eqn: boundary r15}
\tilde{U}_{16}\to \tilde{U}_{15}'-\tilde{U}_{15}, \quad 
(L_1, \cdots, L_6)\mapsto (L_3+L_6, L_1, L_2, L_4, L_5). 
\end{equation}
The weak marking \eqref{eqn: weak marking 15} extends over $\tilde{U}_{15}'$, 
and its pullback by \eqref{eqn: boundary r15} agrees with the weak marking \eqref{eqn: weak marking 16} for $\tilde{U}_{16}$. 

Finally, $U_{15}'$ is defined as the image of the natural map $\tilde{U}_{15}'\to {\sextic}$. 
Clearly we have $U_{15}'-U_{15}=U_{16}$. 
Note that $U_{15}'$ is non-normal at the boundary: 
it has $15=\binom{6}{2}$ branches corresponding to the choice of which two lines to be smoothed. 
Hence the normalization of $U_{15}'$ has degree $15$ over the boundary. 
Together with the degree $2$ of \eqref{eqn: boundary r15}, 
this explains the difference of the degree of 
$\tilde{U}_{15}\to U_{15}$ and that of $\tilde{U}_{16}\to U_{16}$.

\subsection{The case $r=14$}\label{ssec: r=14}

We define parameter spaces in the case $r=14$ 
by partially smoothing $Q+L_3$ in the case $r=15$ to irreducible nodal cubics. 
We denote by $V_{nc}\subset{\cubic}$ the codimension $1$ locus of irreducible nodal cubics. 
Let $\tilde{U}_{14}$ be the locus in 
$V_{nc}\times {\lin}^3\times {\plane}$ parametrizing 
tuples $(C, L_1, L_2, L_3, p)$ such that 
$C+L_1+ L_2 +L_3$ has at most nodes and $p\in C\cap L_1$. 
(See Figure 6.) 
Then let $U_{14}$ be the image of the natural map 
\begin{equation*}
\tilde{U}_{14}\to {\sextic}, \qquad (C, L_1, L_2, L_3, p)\mapsto C + L_1 + L_2 + L_3. 
\end{equation*}
The covering $\tilde{U}_{14}\to U_{14}$ endows the sextics $C + L_1 + L_2 + L_3$ 
with labelings of the lines $L_1, L_2, L_3$ and choices of a point $p$ from $C\cap L_1$. 
Hence it has degree $18=3! \cdot 3$. 
For $(C, L_1, L_2, L_3, p)\in \tilde{U}_{14}$ we define a weak marking of $C + L_1 + L_2 + L_3$ by  
\begin{equation}\label{eqn: weak marking 14}
q_1 = p, \qquad q_2 = L_2\cap L_3. 
\end{equation}
The associated 2-elementary $K3$ surfaces have invariant $(14, 8, 1)$,  
and $U_{14}$ dominates the moduli space $\mathcal{M}_{14,8,1}$ (\cite{Ma1} \S 9.1). 
This family of $K3$ surfaces was also studied in \cite{KSTT}. 

\begin{figure}[h]\label{figure: r14}
\includegraphics[height=45mm, width=65mm]{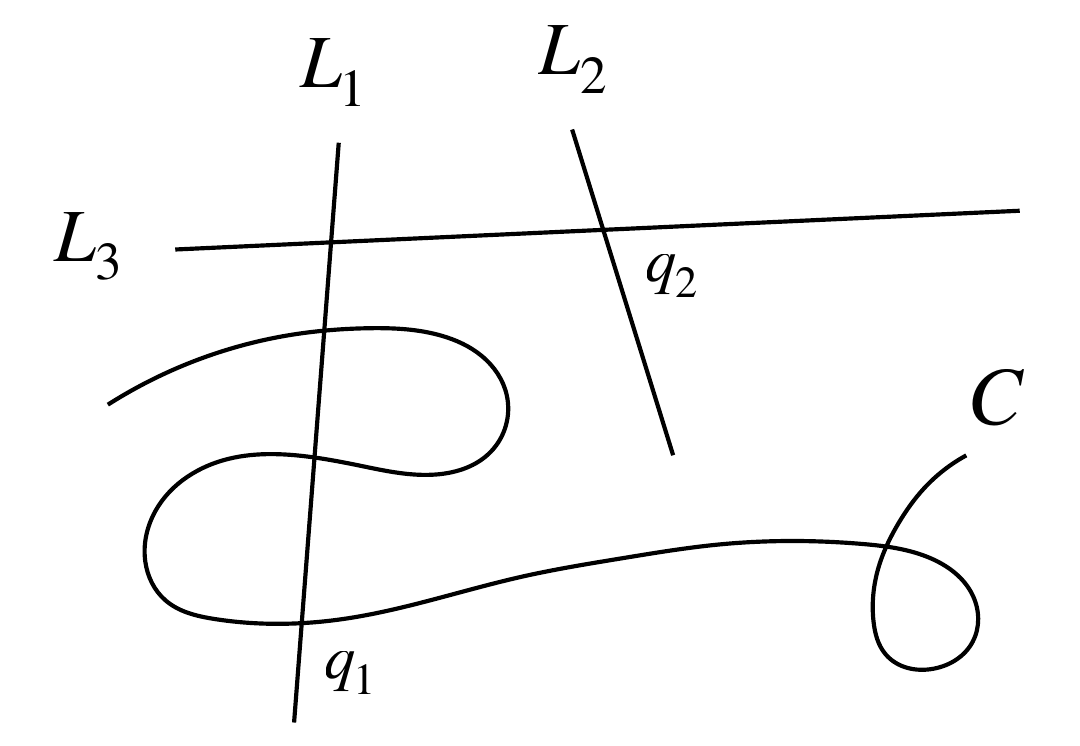}
\caption{$r=14$}
\end{figure}

Next we define partial compactifications by allowing the nodal cubic to be reducible. 
Let $V_{nc}'\subset {\cubic}$ be the locus of nodal cubics 
which is either irreducible or sum of a smooth conic and a line. 
We define $\tilde{U}_{14}'$ as the locus in 
$V_{nc}'\times {\lin}^3\times {\plane}$ parametrizing 
tuples $(C, L_1, L_2, L_3, p)$ such that 
$C+L_1+ L_2 +L_3$ has at most nodes and $p\in C\cap L_1$, 
plus the condition that when $C$ is reducible, $p$ is the intersection of the line component of $C$ with $L_1$. 
(If we allow $p$ from the intersection of the conic component with $L_1$, we get an additional boundary divisor.) 
The boundary $\tilde{U}_{14}'-\tilde{U}_{14}$ is identified with $\tilde{U}_{15}$ by the isomorphism 
\begin{equation*}
\tilde{U}_{15}\to \tilde{U}_{14}'-\tilde{U}_{14}, \quad 
(Q, L_1, \cdots, L_4)\mapsto (Q+L_3, L_1, L_2, L_4, L_1\cap L_3). 
\end{equation*}
The weak marking \eqref{eqn: weak marking 14} extends over $\tilde{U}_{14}'$. 
At the boundary it agrees with 
the weak marking \eqref{eqn: weak marking 15} for $\tilde{U}_{15}$ via this map. 

Finally, we let $U_{14}'$ be the image of the natural map $\tilde{U}_{14}'\to {\sextic}$. 
We have $U_{14}'-U_{14}=U_{15}$. 
Both $\tilde{U}_{14}'$ and $U_{14}'$ are non-normal at the boundary: 
$U_{14}'$ has eight branches corresponding to the choice of 
which point of $Q\cap L_1+\cdots +L_4$ to get resolved, 
and $\tilde{U}_{14}'$ has two branches corresponding to the choice of 
which point of $Q\cap L_3$ to get resolved. 
(The non-normality of $\tilde{U}_{14}'$ inherits that of $V_{nc}'$.) 
Note that the non-normality of $\tilde{U}_{14}'$ does not affect our construction, 
as the limit weak marking at the boundary does not depend on the choice of branch from which we approach.  

\subsection{The case $r=13$}\label{ssec: r=13}

Parameter spaces in the case $r=13$ are defined by smoothing $L_1+L_2$ in the case $r=14$. 
Let $V_{nc}\subset {\cubic}$ be as in \S \ref{ssec: r=14}. 
We define 
\begin{equation*}
\tilde{U}_{13} \: \subset \: V_{nc}\times {\conic} \times {\lin}\times ({\plane})^2 
\end{equation*}
as the locus of tuples $(C, Q, L, q_1, q_2)$ such that 
$Q$ is smooth, $C+Q+L$ has at most nodes, $q_1\in C\cap Q$ and $q_2\in Q\cap L$. 
(See Figure 7.) 
Then let $U_{13}$ be the image of the natural map 
\begin{equation*}
\tilde{U}_{13}\to {\sextic}, \qquad (C, Q, L, q_1, q_2)\mapsto C + Q + L. 
\end{equation*}
The covering $\tilde{U}_{13}\to U_{13}$ 
attaches the weak markings $(q_1, q_2)$ to the sextics $C+Q+L$. 
It has degree $12=6\cdot 2$. 
The associated 2-elementary $K3$ surfaces have invariant $(13, 9, 1)$,  
and $U_{13}$ dominates the moduli space $\mathcal{M}_{13,9,1}$ (see \cite{Ma1} \S 8.1). 

\begin{figure}[h]\label{figure: r13}
\includegraphics[height=48mm, width=62mm]{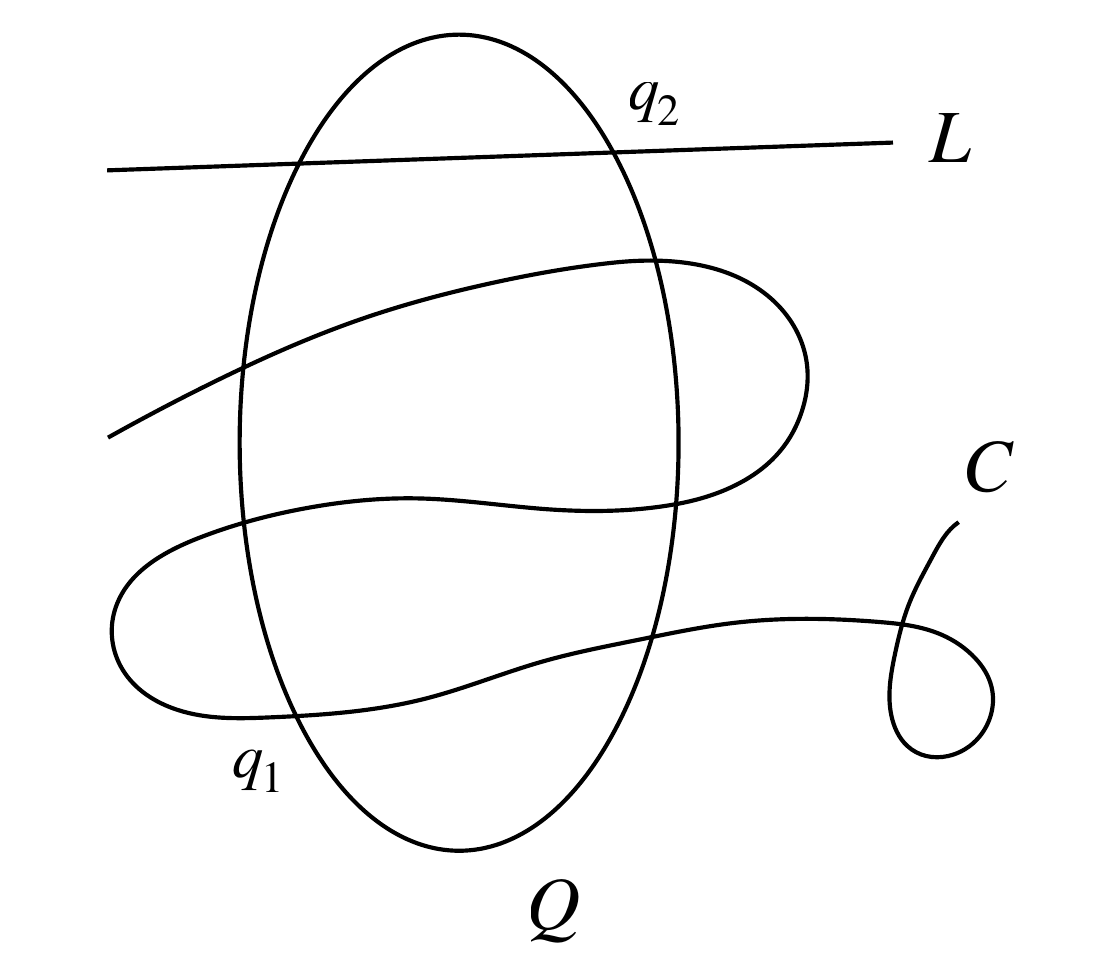}
\caption{$r=13$}
\end{figure}

Partial compactifications are defined by allowing the conic $Q$ to split. 
Thus we define 
\begin{equation*}
\tilde{U}_{13}' \: \subset \: V_{nc}\times {\conic} \times {\lin}\times ({\plane})^2 
\end{equation*}
by the same conditions as for $\tilde{U}_{13}$ except that 
we require $Q$ only to be reduced, and impose the condition that 
when $Q$ splits, $q_1$ and $q_2$ belong to different components of $Q$. 
Note that $\tilde{U}_{13}'$ is still smooth at the boundary $\tilde{U}_{13}'-\tilde{U}_{13}$. 
We have the natural isomorphism 
\begin{equation*}
\tilde{U}_{14}\to \tilde{U}_{13}'-\tilde{U}_{13}, \quad 
(C, L_1, L_2, L_3, p)\mapsto (C, L_1+L_2, L_3, p, L_2\cap L_3). 
\end{equation*}
The limit weak marking $(q_1, q_2)$ at the boundary of $\tilde{U}_{13}'$ coincides with  
the weak marking \eqref{eqn: weak marking 14} for $\tilde{U}_{14}$ via this isomorphism. 

Finally, $U_{13}'$ is defined as the image of the natural map $\tilde{U}_{13}'\to {\sextic}$. 
We have $U_{13}'-U_{13}=U_{14}$. 
Then $U_{13}'$ has three branches at the boundary corresponding to the choice of 
which two of the three lines to be smoothed.

\subsection{The case $r=12$}\label{ssec: r=12}

Parameter spaces in the case $r=12$ are defined 
by partially smoothing $Q+L$ in the case $r=13$. 
Thus we let $\tilde{U}_{12}$ be the locus in 
$(V_{nc})^2\times {\plane}$ parametrizing triplets $(C_1, C_2, p)$ such that 
$C_1+C_2$ has at most nodes and $p\in C_1\cap C_2$. 
(See Figure 8.) 
Then let $U_{12}$ be the image of the natural map 
\begin{equation*}
\tilde{U}_{12}\to {\sextic}, \qquad (C_1, C_2, p)\mapsto C_1 + C_2. 
\end{equation*}
The covering $\tilde{U}_{12}\to U_{12}$ endows the sextics $C_{1}+C_{2}$ with 
labelings of the components $C_1, C_2$ and choices of a point $p$ from $C_1\cap C_2$. 
Hence it has degree $18=2\cdot 9$. 
This marking is equivalent to the weak marking 
\begin{equation}\label{eqn: weak marking 12}
q_1 = p, \qquad q_2 = {\rm Sing}(C_2). 
\end{equation}
The associated 2-elementary $K3$ surfaces have invariant $(12, 10, 1)$,  
and $U_{12}$ dominates the moduli space $\mathcal{M}_{12,10,1}$ (see \cite{Ma1} \S 7.1). 

\begin{figure}[h]\label{figure: r12}
\includegraphics[height=44mm, width=64mm]{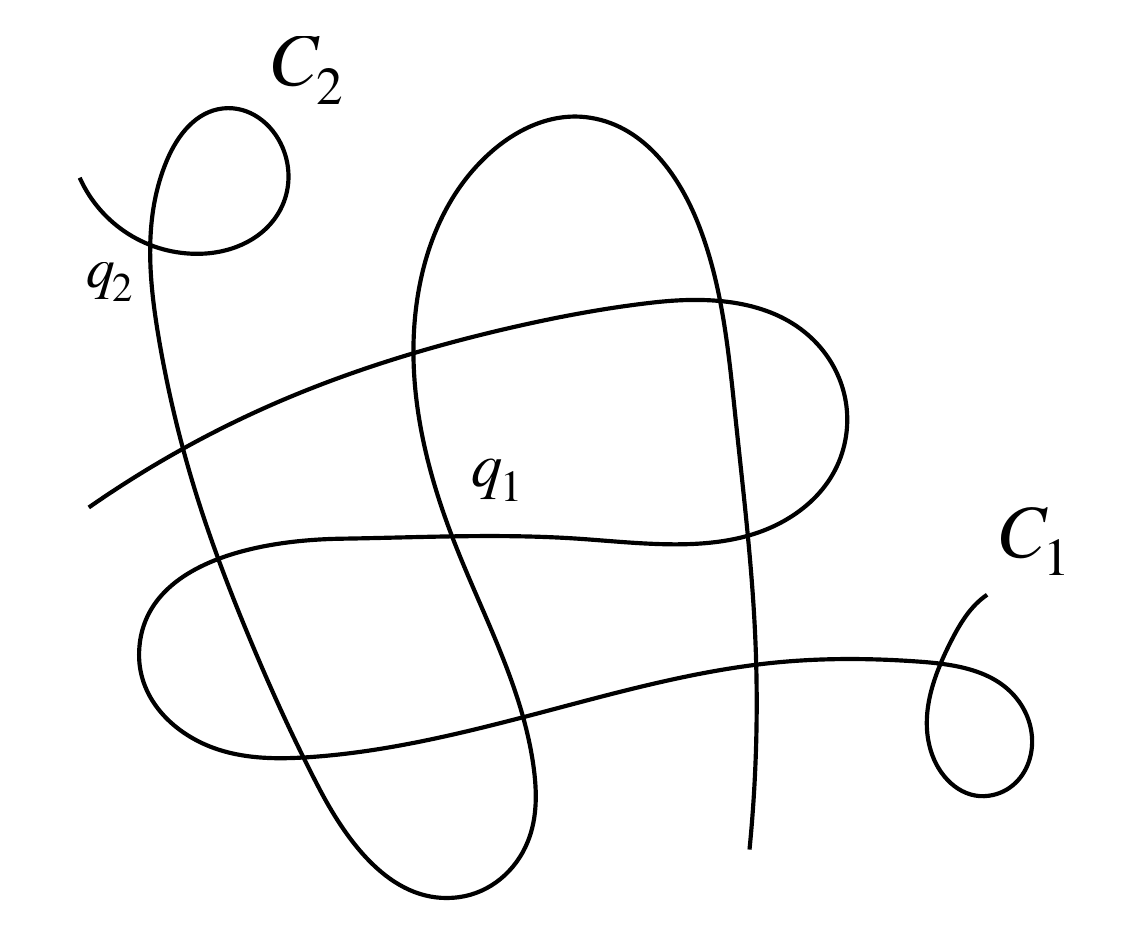}
\caption{$r=12$}
\end{figure}

Partial compactifications can be obtained by allowing $C_2$ to split, 
but here we have to be careful as regards to limit of weak marking. 
Let $V_{nc}'$ be the partial compactification of $V_{nc}$ considered in \S \ref{ssec: r=14} 
whose boundary is the locus of sum of smooth conics $Q$ and lines $L$ meeting transversely. 
Then $V_{nc}'$ is non-normal at the boundary, 
where it has two branches corresponding to the choice of which of $Q\cap L$ to be resolved (see \cite{KS} \S 2). 
If we simply take the closure of $\tilde{U}_{12}$ in $V_{nc}\times V_{nc}'\times {\plane}$, 
then we would get two limit weak markings at the boundary depending on which branch we approach from: 
$q_2$ is the point of $Q\cap L$ which is not resolved. 
Thus the limit weak marking is not well-defined.  
For this reason, we need to take the normalization of $V_{nc}'$. 

So let $V_{nc}''$ be the normalization of $V_{nc}'$. 
At the boundary, this gives an etale double cover of the boundary of $V_{nc}'$ which chooses one of $Q\cap L$. 
By abuse of notation, we denote a boundary point of $V_{nc}''$ still by $C$. 
This means a pair $(Q+L, q)$ where $Q$ is a smooth conic, 
$L$ is a line meeting $Q$ transversely, and $q\in L\cap Q$. 
We take the convention that $q$ stands for the point that is not resolved. 
Now we define 
\begin{equation*}
\tilde{U}_{12}' \; \subset \; V_{nc}\times V_{nc}'' \times {\plane} 
\end{equation*}
as the locus of triplets $(C_1, C_2, p)$ 
such that $C_1+C_2$ has at most nodes, $p\in C_1\cap C_2$, 
and when $C_2$ splits, $p$ is contained in the intersection of $C_1$ with the conic component of $C_2$. 
In this way, the weak marking \eqref{eqn: weak marking 12} extends to weak marking over $\tilde{U}_{12}'$: 
at the boundary, this is given by $(p, q)$ where $q$ is the point of $Q\cap L$ which is not resolved. 
Then the boundary $\tilde{U}_{12}'-\tilde{U}_{12}$ is identified with $\tilde{U}_{13}$ by the isomorphism 
\begin{equation*}
\tilde{U}_{13}\to \tilde{U}_{12}'-\tilde{U}_{12}, \quad (C, Q, L, q_1, q_2)\mapsto (C, (Q+L, q_2), q_1). 
\end{equation*}
It is now clear that the limit weak marking at the boundary of $\tilde{U}_{12}'$ agrees with  
the weak marking for $\tilde{U}_{13}$ via this map. 

Finally, we let $U_{12}'$ be the image of the natural map $\tilde{U}_{12}'\to {\sextic}$. 
We have $U_{12}'-U_{12}=U_{13}$. 
Then $U_{12}'$ has two branches at the boundary, 
corresponding to the non-normality of $V_{nc}'$.

\subsection{The case $r=11$}\label{ssec: r=11}

Here we arrive at the top of the Nikulin mountain \eqref{eqn: Nikulin mountain}. 
This is the case where we encounter with Coble curves, 
namely irreducible ten-nodal rational plane sextics. 
Let $U_{11}\subset {\sextic}$ be the locus of Coble curves. 
We define $\tilde{U}_{11}$ by attaching weak markings to Coble curves. 
Thus $\tilde{U}_{11}$ is the locus in $U_{11}\times ({\plane})^2$ parametrizing triplets 
$(C, q_1, q_2)$ such that $q_1\ne q_2$ are nodes of $C$. 
The map $\tilde{U}_{11}\to U_{11}$ has degree $90=10\cdot 9$. 
The associated 2-elementary $K3$ surfaces have invariant $(11, 11, 1)$,  
and $U_{11}$ dominates the moduli space $\mathcal{M}_{11,11,1}$ (\cite{Ma1} \S 6.1). 

The closure of $U_{11}$ in ${\sextic}$ contains $U_{12}$, 
the locus of sum $C_1+C_2$ of two irreducible nodal cubics. 
This is a Zariski open set of one of the irreducible components of the boundary. 
We let $U_{11}'=U_{11}\cup U_{12}$. 
Note that this is non-normal at the boundary. 
It has nine branches corresponding to the choice of which point of $C_1\cap C_2$ to be resolved. 
(The nodes of $C_1$ and $C_2$ themselves will not be resolved when deforming $C_1+C_2$ to Coble curves.) 

We define the partial compactification $\tilde{U}_{11}'$ of $\tilde{U}_{11}$ as follows. 
The closure of $\tilde{U}_{11}$ in $U_{11}'\times ({\plane})^2$ has 
four boundary divisors over $U_{12}$ corresponding to the following four types 
of configuration of limit $(q_1, q_2)$: 
\begin{enumerate}
\item[(A)] $q_1, q_2\in C_1\cap C_2$, 
\item[(B)] $q_1\in C_1\cap C_2$, $q_2={\rm Sing}(C_2)$, 
\item[(C)] $q_1={\rm Sing}(C_1)$, $q_2\in C_1\cap C_2$, 
\item[(D)] $q_1={\rm Sing}(C_1)$, $q_2={\rm Sing}(C_2)$.  
\end{enumerate}
A Zariski open set of the boundary divisor of type (B) is identified with $\tilde{U}_{12}$ by 
setting $p=q_1$ and $C_2$ to be the component which contains $q_2$.  
Then we let $\tilde{U}_{11}'$ be the union of $\tilde{U}_{11}$ with this boundary locus: 
\begin{equation*}
\tilde{U}_{11}' = \tilde{U}_{11} \cup \tilde{U}_{12}. 
\end{equation*}
Clearly the limit weak marking at the boundary of $\tilde{U}_{11}'$ coincides with 
the weak marking \eqref{eqn: weak marking 12} for $\tilde{U}_{12}$. 

Note that the closure of $\tilde{U}_{11}$ in $U_{11}'\times({\plane})^2$ is non-normal at every boundary divisor. 
It has $7$, $8$, $8$, $9$ branches at the boundary divisors of type (A), (B), (C), (D) respectively, 
corresponding to the choice of an \textit{unmarked} intersection point of $C_1$ and $C_2$ to be resolved. 
The boundary divisors of type (A), (B), (C), (D) have degrees 
$72$, $18$, $18$, $2$ over $U_{12}$ respectively. 
By considering the map from the normalization of the closure of $\tilde{U}_{11}$ to that of $U_{11}'$, 
we can understand that these numbers are compatible: 
\begin{equation*}
9 \cdot 90 = 7\cdot 72 + 8 \cdot 18 + 8 \cdot 18 + 9 \cdot 2. 
\end{equation*}

\subsection{The case $3\leq r \leq 10$}\label{ssec: 3r10}

Here we go down the left roof of the Nikulin mountain \eqref{eqn: Nikulin mountain}. 
Let $3\leq r \leq 10$. 
Let $U_{r}\subset {\sextic}$ be the locus of irreducible $(r-1)$-nodal sextics. 
Then $U_{r}$ is smooth and irreducible (\cite{Ha}). 
We define the covering $\tilde{U}_{r}\to U_{r}$ by attaching weak markings. 
Thus $\tilde{U}_{r}$ is defined as the locus in $U_{r}\times ({\plane})^{2}$ 
parametrizing triplets $(C, q_1,  q_2)$ such that  $q_1\ne q_2$ are nodes of $C\in U_{r}$. 
The projection $\tilde{U}_{r}\to U_{r}$ has degree $(r-1)(r-2)$. 
The 2-elementary $K3$ surfaces associated to the sextics in $U_r$ have invariant $(r, r, 1)$, 
and $U_r$ dominates the moduli space $\mathcal{M}_{r,r,1}$ (see \cite{Ma1} \S 4.1 and \S 5.3). 
These $K3$ surfaces were first studied in \cite{M-S}. 

The closure of $U_{r}$ in ${\sextic}$ contains $U_{r+1}$ as a Zariski open set of 
one of the irreducible components of the boundary (see \cite{Ha}). 
We set 
\begin{equation*}
U_{r}' = U_r \cup U_{r+1}  \; \; \subset \; {\sextic}. 
\end{equation*}
Then we define $\tilde{U}_{r}'$ to be the closure of $\tilde{U}_{r}$ in $U_{r}'\times ({\plane})^{2}$. 
Its boundary parametrizes curves in $U_{r+1}$ with two nodes labelled (limit of labelled nodes of $C\in U_{r}$). 
This is exactly $\tilde{U}_{r+1}$. 
Thus 
\begin{equation*}
\tilde{U}_{r}' = \tilde{U}_{r}\cup \tilde{U}_{r+1}  \; \; \subset \; U_r'\times ({\plane})^2. 
\end{equation*}
We have a direct inductive structure here. 

Note that both $U_{r}'$ and $\tilde{U}_{r}'$ are non-normal at the boundary: 
$U_{r}'$ has $r$ branches corresponding to the choice of 
which node of $C\in U_{r+1}$ to be resolved; 
$\tilde{U}_{r}'$ has $r-2$ branches corresponding to the choice of 
which \textit{unmarked} node to be resolved. 
The difference of the degree $(r-1)(r-2)$ of $\tilde{U}_{r} \to U_{r}$ 
and the degree $r(r-1)$ of $\tilde{U}_{r+1} \to U_{r+1}$ can be understood by 
considering the map from the normalization of $\tilde{U}_{r}'$ to that of $U_{r}'$.


\begin{remark}
The quotient $Y=X/\iota$ is the blow-up of ${\plane}$ at the nodes of $C$. 
If $C\in U_{r}$ is general, $Y$ is a del Pezzo surface of degree $10-r$ when $r\leq 9$, 
and is known as a Halphen surface when $r=10$. 
The $(-1)$-curves over the nodes of $C$ and the strict transform of $L=\overline{q_1 q_2}$ are lines on $Y$. 
Thus our higher Chow cycles can be seen as obtained from two intersecting lines on del Pezzo and Halphen surfaces. 
 \end{remark}



\end{document}